\documentclass[a4paper,11pt]{article}

\usepackage{amsmath}
\usepackage{amsfonts}
\usepackage{amssymb}
\usepackage{amsthm}
\usepackage{graphicx}
\usepackage{subfigure}
\usepackage{enumerate}
\usepackage{color}
\usepackage{hyperref}
%\usepackage[notref,notcite]{showkeys}
%\usepackage{fancyhdr}

%\fancyhf{} % Alle Kopf- und Fussfelder l\"oschen
%\pagestyle{fancy}

%\fancyfoot[EC,OC]{\thepage}
%\fancyhead[EL]{\leftmark}
%\fancyhead[OR]{\rightmark}

%\renewcommand{\chaptermark}[1]{
%  \markboth{\MakeUppercase{\textbf{\chaptername \ \thechapter. \ #1}}}{}
%}
%\renewcommand{\sectionmark}[1]{
%  \markright{\MakeUppercase{\textbf{\thesection. \ #1}}}{}
%}

%\setlength{\topmargin}{1cm} %Abstand Kopfzeile vom oberen Papierrand
%\setlength{\headheight}{13.6pt} %H\"ohe der Kopfzeile
%\setlength{\headsep}{0cm} %Abstand der Kopfzeile vom oberen Textrand
%\setlength{\textheight}{21cm} %Texth\"ohe
%\setlength{\footskip}{1cm} %Abstand der Fu\sszeile vom unteren Textrand  
%\setlength{\footheight}{1cm} %H\"ohe der Fu\sszeile        !!! geht nicht !!!
\setlength{\textwidth}{16cm} %Textbreite
\setlength{\oddsidemargin}{4mm} %Abstand linker Rand vom linken Papierrand f\"ur ungerade Seiten 
\setlength{\evensidemargin}{4mm} %Abstand linker Rand vom linken Papierrand f\"ur gerade Seiten  

\newcommand{\E}{\ensuremath{\mathcal{E}}}

\newcommand{\R}{\mathbb{R}}
\newcommand{\Rn}{\mathbb{R}^d}
\newcommand{\Rd}{\mathbb{R}^d}

\newcommand{\bv}{\mathbf{v}}

\newcommand{\tbj}{\widetilde{\mathbf{J}}}
\newcommand{\bpsi}{\boldsymbol{\psi}}

\newcommand{\N}{\ensuremath{\mathbb{N}}}

\newcommand{\ol}[1]{\overline{#1}}

\newcommand{\eps}{\ensuremath{\varepsilon}}
\newcommand{\ep}{\ensuremath{\varepsilon}}
\newcommand{\tc}{c}

\newcommand{\LL}{\mathcal{L}}

\newcommand{\duality}[2]{\left\langle #1, #2 \right\rangle}

\newcommand{\no}{\mathbf{n}}
\newcommand{\ve}{\mathbf{v}}
\newcommand{\we}{\mathbf{w}}

\newcommand{\Div}{\operatorname{div}}

\newtheorem{definition}{Definition}[section]
\newtheorem{remark}[definition]{Remark}

\newtheorem{lemma}[definition]{Lemma}
\newtheorem{theorem}[definition]{Theorem}

\newtheorem{examples}[definition]{Examples}

\numberwithin{equation}{section}  %equation-counter is set back to zero at the beginning of each section

%%%%%%%%%%%%%%%%%%%%%%%%%%%%%%%%%%%%%%%%%%%%%%%%%%%%%%%%%%%%%%%%%%%%%%%%%%%%%%%%%%%%%%%%%%%%%%%%
%%%%%%%%%%%%%%%%%%%%%%%%%%%%%%%%%%%%%%%%%%%%%%%%%%%%%%%%%%%%%%%%%%%%%%%%%%%%%%%%%%%%%%%%%%%%%%%%
%%%%%%%%%%%%%%%%%%%%%%%%%%%%%%%%%%%%%%%%%%%%%%%%%%%%%%%%%%%%%%%%%%%%%%%%%%%%%%%%%%%%%%%%%%%%%%%%
\begin{document}
 %%% Neu:
\begin{titlepage}
\title{Existence and Nonlocal-to-Local Convergence for Singular, Anisotropic Nonlocal Cahn-Hilliard Equations}
\author{Helmut Abels\footnote{Fakult\"at f\"ur Mathematik,  
Universit\"at Regensburg,
93040 Regensburg,
Germany, e-mail: {\sf helmut.abels@mathematik.uni-regensburg.de}}\ \ and 
Yutaka Terasawa\footnote{Graduate School of Mathematics, Nagoya University, Furocho Chikusaku, Nagoya, 464-8602, Japan, e-mail: {\sf yutaka@math.nagoya-u.ac.jp}}
}
\date{\today\\[2ex]
\emph{Dedicated to Robert Denk on the occassion of his 60th birthday.}}
\end{titlepage}
\maketitle
\begin{abstract}
We study the nonlocal-to-local convergence for a nonlocal Cahn-Hilliard equation with anisotropic and singular kernels. In particular, we show convergence of weak solutions of the nonlocal Cahn-Hilliard equation to weak solutions of a corresponding anisotropic Cahn-Hilliard equation for suitable subsequences. Moreover, we show existence of weak solutions for the nonlocal equation under a condition, which guarantees existence of weak solutions for suitably localized or singular kernels.   
\end{abstract}
\noindent{\bf Key words:} anisotropic Cahn-Hilliard equation, nonlocal operators, nonlocal-to-local convergence, two-phase flows, diffuse interface models, fractional Laplacian

\noindent{\bf AMS-Classification:} 
Primary: 45K05; % Integro-partial differential equations
Secondary: 35B40, % Asymptotic behavior of solutions to PDEs
35K61 % Nonlinear initial, boundary and initial-boundary value problems for nonlinear parabolic equations

%%%%%%%%%%%%%%%%%%%%%%%%%%%%%%%%%%%%%%%%%%%%%%%%%%%%%%%%%%%%%%%%%%%%%%%%%%%%%%%%%%%%%%%%%%%%%%%%
%%%%%%%%%%%%%%%%%%%%%%%%%%%%%%%%%%%%%%%%%%%%%%%%%%%%%%%%%%%%%%%%%%%%%%%%%%%%%%%%%%%%%%%%%%%%%%%%
%%%%%%%%%%%%%%%%%%%%%%%%%%%%%%%%%%%%%%%%%%%%%%%%%%%%%%%%%%%%%%%%%%%%%%%%%%%%%%%%%%%%%%%%%%%%%%%%

\section{Introduction} \label{intro}
The Cahn-Hilliard equation was introduced by Cahn and Hilliard \cite{cahn1958free} in 1958 to describe
phase separation of binary alloys. It is one of the basic so-called phase field model. Variants of it are used in models with two or multiple components such as two-phase flows of fluids, tumor growth and many other situations, cf.\ e.g.\ \cite{MiranvilleCH}. 
The basic Cahn-Hilliard equation with standard boundary conditions yields the system
\begin{alignat}{2}
    \partial_t c &= \Div (m(c)\nabla \mu_\eps)\ &\qquad&\text{in}\ \Omega\times (0,T), \label{eq:CH1}\\
    \mu &=  - \Delta c + f'(c)&\qquad&\text{in}\ \Omega\times (0,T), \label{eq:CH2}\\
    \partial_{\no} \mu|_{\partial\Omega}  &= \partial_\no c|_{\partial \Omega} = 0 &\qquad&\text{on}\ \partial\Omega\times (0,T), \label{eq:CH3}\\
    c|_{t=0}&= c_0 &&\text{in}\ \Omega. \label{eq:CH4}
\end{alignat}
Here $ \Omega $ is a bounded domain with $C^2$-boundary and exterior normal $\no$, $ c $ is an order parameter related to the concentrations of the components (e.g., the concentration difference), $ \mu $ is a chemical potential, $f$ is a suitable so-called double-well potential, which will be specified later.  Finally, $ m \colon \R\to (0,\infty)$ is a mobility coefficient, which is assumed to be sufficiently smooth. Notice that 
the equations in \eqref{eq:CH3} are Neumann boundary conditions for the chemical potential $ \mu $ and the order parameter $c$. These describe a no-flux and a ninety degree boundary condition for the diffuse interface, respectively. 

In the case that $f$ is a suitable fourth order polynomial Elliott and Zheng~\cite{ElliottZhengCH} proved existence of a regular solutions for all times in $L^2$-Sobolev spaces. A disadvantage of it is that smooth $f$ do not ensure that $c(x,t)\in [-1,1]$ almost everywhere, where $[-1,1]$ is the physical reasonable range for $c$. A counter-example and further references can be found in \cite{MiranvilleCH}.
One way out is to use suitable singular $f$. An important example is given by $f\colon [-1,1]\to \R$ with
\begin{equation*} 
f(s) = \frac{\theta}{2} ((1+s) \log(1+s) + (1-s) \log (1-s)) - \frac{\theta_c}{2}s^2 \qquad \text{for all }s\in [-1,1]
\end{equation*}  
for some $0 < \theta_c < \theta$, which was introduced in \cite{cahn1958free}. In this case well-posedness of the equation was first shown in the preprint of Elliott and Luckhaus~\cite{ElliottLuckhaus}. Alternative proofs were given by Debussche and Dettori~\cite{DebusscheDettori}, A. and Wilke~\cite{AsymptoticCH}, and Kenmochi et al.~\cite{KenmochiEtAl} in a more general context.  % and convergence of its solution to equilibrium was first shown by Abels-Wilke. 
In the following we will assume that $f\colon [-1,1]\to \R$ is continuous, twice continuously differentiable in $(-1,1)$, and $\inf_{s\in (-1,1)} f''(s)>-\infty$.

% The equation could be coupled with the Navier-Stokes equations and it gives a diffuse interface description of two-phase flows. When the two fluids have matched densities case, the model was proposed by Hohenberg and Halperin~\cite{HohenbergHalperin} and rigorosly derived by Gurtin, Pilignone, and Vinals~\cite{GurtinTwoPhase}. \comment{I would move this part to the last section and reduce the number of references since this system is not in our focus.} 
% The model was studied by Abels~\cite{ModelH}, Gal-Grasselli~\cite{GalGrasselliModelH2D}, Giorgini-Miranville and Temam~\cite{} and others. When the two fluids have different densities, the model was proposed by Abels-Garcke-Gruen. The existence of weak solution for that model was first established by Abels-Depner-Garcke. The well-posedness of the model in two dimension was established by Giorgini ('21) and the existence of strong solution and convergence properties of weak solutions in three dimension was studied by Giorgini('22) and Abels-Garcke-Giorgini('24).

The nonlocal Cahn-Hilliard equation was introduced by Giacomin and Lebowitz~\cite{GiacominLebowitz} as a limit of interacting particle systems and leads to the system
\begin{alignat}{2}
    \partial_t c &= \Div (m(c)\nabla \mu)\ &\qquad&\text{in}\ \Omega\times (0,T), \label{eq:nlCH1}\\
    \mu &= \LL c + f'(c)&\qquad&\text{in}\ \Omega\times (0,T), \label{eq:nlCH2}\\
    \partial_{\no} \mu|_{\partial\Omega}  &= 0 &\qquad&\text{on}\ \partial\Omega\times (0,T), \label{eq:nlCH3}\\
    c|_{t=0}&= c_0 &&\text{in}\ \Omega. \label{eq:nlCH4}
\end{alignat}
Here $ \mathcal{L}$ is a nonlocal linear operator defined as 
\begin{align}\label{eq:defnL}
    \LL u(x) %&= \pv \int_{\Omega} (u(x)-u(y))k dy\\\nonumber
    &=\lim_{\delta \to 0} \int_{\Omega\setminus B_\delta(x)} (u(x)-u(y))k(x-y)dy
\end{align}
for suitable $u\colon \Omega \to \R$, where  $k \colon \Rd\setminus\{0\}\to [0,\infty)$ is nonnegative and satisfies $ k(z) = k(-z)$ for all $z\in \R^d\setminus \{0\}$.
In the case that $k$ is a suitable integrable kernel, Gajewski and Zacharias~\cite{GajewskiZacharias} have shown existence and uniqueness of suitable (weak) solutions and studied their asymptotic behavior in the case that $f$ is a suitable regular potential. We refer to Giorgini~\cite{GiorginiSepeartionNonlocalCH} and Poiatti~\cite{Poiatti3DStrictSeparationNonlocalCH} on recent results concerning the analysis in the case of a singular free energy and to Lv and Wu~\cite{LvWuNonlocalBCs} for analytic result in the case of nonlocal dynamic boundary conditions. In these contributions further references can be found.
%The case $ k \in W^{1, 1}(\Rd) $ are very much studied in the literature and well-posedness of the equation and regularity of the solution of the equation was first studied in Gal, Giorgini and Grasselli. %This equation could be coupled with 
%Navier-Stokes equations. This is studied in the above mentioned work Gal-Giorgini-Grasselli and also in Frigeri-Grasselli('12), Colli-Frigeri-Grasselli ('12), Frigeri('16).

We note that in the case
$$ 
k(z) = C(d, \alpha) \frac{1}{|z|^{d + \alpha}} \qquad \text{for } z\in \R^n\setminus \{0\}, 
$$ 
where $ \alpha \in (1, 2) $ and $ C(d, \alpha) = \frac{\alpha 4^{\alpha}}{\pi^\frac{d}{2}} \frac{\Gamma(\alpha+\frac{d}{2})}{\Gamma(1-\alpha)} $, the operator $ \mathcal{L} $ is a regional fractional Laplacian. For such a class of operators existence and uniqueness of weak solutions was shown by A., Bosia, and Grasselli~\cite{AbelsBosiaGrasselli}.  Regional fractional Laplacians are generators of censored stable processes and were introduced by Bogdan, Burdzy, and Chen~\cite{BogdanBurdzyChen}. They have been studied both in a probabilistic and an analytic context. We refer to Daoud and Laamri~\cite{DaoudLaamri} for a survey of various types of fractional Laplacians. We note that the regional fractional Laplacian defined above is called restricted fractional Laplacian in  \cite{DaoudLaamri}. A recent interesting development for regional fractional Laplacians is an optimal boundary regularity for an elliptic Poisson equation involving regional fractional Laplacian in Fall~\cite{FallBoundaryRegularity}.
In \cite{AbelsBosiaGrasselli} operators, which have non-translation-invariant kernels,
are also treated and boundary regularity of solutions is discussed. Finally, we refer to Davoli, Gavioli, and Lombardini~\cite{DavoliGavioliLombardiniCHSingularKernels} for a recent result on existence of solutions for a large class of singular kernels.

Recently, the nonlocal-to-local convergence of Cahn-Hilliard equations has been much studied. This means that solutions to the nonlocal Cahn-Hilliard equation converges to those of the
local Cahn-Hillard equation when the kernel $k$ concentrates at the origin suitably. This was first studied by
Davoli, Ranetbauer, and Scarpa~\cite{DavoliEtAlNonlocalLocalCHPeriodic} in a periodic setting. Nonlocal-to-local convergence in
a bounded domain with Neumann boundary condition was studied by Davoli, Scarpa and Trussardi~\cite{DavoliEtAlNonlocalLocalCHViscosity}.
In Davoli, Scarpa, and Trussardi~\cite{DavoliEtAlW11}, the kernel is not assumed to be $ W^{1}_{1}$ as in the previous literature. Because of that, they had to add a viscosity term to the chemical potential to get the existence of solutions to the nonlocal equation. These results rely on results by Bourgain, Brezis, and Mironescu~\cite{BourgainBrezisMironescu, BourgainBrezisMironescu2} and Ponce~\cite{PonceNonlocalPoincare,PonceNonlocalLocalConvergence}.
Their results give a new characterization of Sobolev spaces using nonlocal operator, nonlocal Poincar\'{e} inequalities and $\Gamma$-convergence of nonlocal energy functional. Moreover, nonlocal Poincar\'{e} inequalities give rise to Rellich type compactness lemmas. We note that in A. and Hurm~\cite{AbelsHurm} rates of convergence have been shown under suitable assumptions.

Nonlocal-to-local convergence results are further extended to various coupled systems, cf.\ e.g., A. and T.~\cite{AbelsTerasawaNonlocalToLocal}, Kurima~\cite{Kurima}, Colli, Kurima, and Scarpa~\cite{ColliKurimaScarpa}, Hurm, Knopf, and Poiatti~\cite{HurmKnopfPoiatti}, and Hurm and Moser~\cite{HurmMoser}. 

In these previous studies of nonlocal-to-local convergence for the Cahn-Hilliard equation, the kernels for the nonlocal operator are assumed to be radially symmetric and in $W^{1, 1}$.
On the other hand, the kernels for the nonlocal Cahn-Hilliard equation derived by Giacomin and Lebowitz ~\cite{GiacominLebowitz} are even,
i.e., $ k(z) = k(-z) $ for all $ z \in \Rd $ and not necessarily radially symmetric.  Many subsequent works such as Gal, Giorgini and Grasselli~\cite{GalGiorginiGrasselli}, Frigeri and Grasselli~\cite{FrigeriGrasselli}, Colli, Frigeri and Grasselli~\cite{ColliFrigeriGrasselli}, Frigeri~\cite{FrigeriNonlocalAGG}, to name a few, work on the equation with an even $W^{1,1}$ kernel, which is not necessarily radially symmetric. Nonlocal-to-local convergence of solutions to these equations is not covered in the previous studies and existence only partly. Since the kernels associated with regional fractional Laplacians is not integrable; in particular, the nonlocal Cahn-Hilliard equation
treated in A., Bosia and Grasselli~\cite{AbelsBosiaGrasselli} was also not covered. 

In this contribution, we would like to fill these gaps in previous study of nonlocal-to-local convergence of solutions to the Cahn-Hilliard equation. We aim to show nonlocal-to-local convergence of solutions to the nonlocal Cahn-Hilliard equation in the case of an even kernel, which is not necessarily radially symmetric and is singular, i.e., not integrable.
We also show that the existence of solutions for nonlocal Cahn-Hilliard equations when it is close to local ones in a suitable sense. As a limit, we obtain a certain anisotropic Cahn-Hilliard equation, where the Laplacian in the chemical potential is replaced by an elliptic differential operator of second order with constant coefficients.
Note that the well-posedness of more general anisotropic Cahn-Hillard equations was studied e.g.\ by Garcke, Knopf, and Wittmann~\cite{GarckeKnopfWittmann} and it applies to the anisotropic Cahn-Hilliard equation above.

More precisely, we consider the limit $\eps\to 0$ of solutions to 
\begin{alignat}{2}
    \partial_t c_\eps +\Div (\ve_\eps  c_\eps) &= \Div (m(c_\eps)\nabla \mu_\eps)\ &\qquad&\text{in}\ \Omega\times (0,T), \label{eq:nlCH1}\\
    \mu_\eps &= \LL_\eps c_\eps + f'(c_\eps)&\qquad&\text{in}\ \Omega\times (0,T), \label{eq:nlCH2}\\
    m(c_\eps) \partial_{\no} \mu_\eps|_{\partial\Omega}  &=\mathbf{n}\cdot \ve_\eps c_\eps |_{\partial\Omega}&\qquad&\text{on}\ \partial\Omega\times (0,T), \label{eq:nlCH3}\\
    c_\eps|_{t=0}&= c_{0,\eps} &&\text{in}\ \Omega, \label{eq:nlCH4}
\end{alignat}
where $\ve_\eps\colon \Omega\times (0,T)\to \R^d$, $c_{0, \eps} \colon \Omega \to \R$ are given and converge to some $ \ve\colon 
 \Omega\times (0, T)\to \R^d $ and $c_0\colon \Omega \to \R$, respectively, in an appropriate sense , and  
\begin{align}\label{eq:defnL}
    \LL_\eps u(x) %&= \pv \int_{\Omega} (u(x)-u(y))k_\eps(x-y)dy\\\nonumber
    &=\lim_{\delta \to 0} \int_{\Omega\setminus B_\delta(x)} (u(x)-u(y))k_\eps(x-y)dy.
\end{align}
The precise assumptions on $k_\eps\colon \Rd\setminus\{0\}\to [0,\infty)$ are specified in Section~\ref{sec:prelim} below. 
We note here that the assumptions on $ k_\eps$ require neither radial symmetry, integrability, regularity. 
The additional transport term $\Div (\ve_\eps  c_\eps)$ is motivated by a corresponding coupled Navier-Stokes/Cahn-Hilliard system, which will be studied in Section~\ref{sec:NSCH}. 

As limit we obtain the following local anisotropic Cahn-Hilliard equation
\begin{alignat}{2}
    \partial_t c +\Div (\ve  c) &= \Div (m(c)\nabla \mu)\ &\qquad&\text{in}\ \Omega\times (0,T), \label{eq:anCH1}\\
    \mu &= - \Div (A \nabla c) + f'(c)&\qquad&\text{in}\ \Omega\times (0,T), \label{eq:anCH2}\\
    \mathbf{n} \cdot (A \nabla c)|_{\partial \Omega} &= 0  &\qquad&\text{on}\ \partial\Omega\times (0,T), \label{eq:anCH3}\\     
    m(c)\partial_{\no} \mu|_{\partial\Omega}  &=\mathbf{n}\cdot \ve c|_{\partial\Omega}&\qquad&\text{on}\ \partial\Omega\times (0,T), \label{eq:anCH4}\\
    c|_{t=0}&= c_{0} &&\text{in}\ \Omega, \label{eq:anCH5}
\end{alignat}
where $A$ is determined by the limit of the second moments of $k_\eps$, cf.~\eqref{eq:ajk} below. 
We show the existence of weak solutions to the nonlocal Cahn-Hillard equations \eqref{eq:nlCH1}-\eqref{eq:nlCH4} 
for sufficiently small $ \eps >0 $ and the convergence in a suitable sense 
of weak solutions to the nonlocal Cahn-Hilliard equations \eqref{eq:nlCH1}-\eqref{eq:nlCH4} to those of the corresponding anisotropic Cahn-Hilliard equation \eqref{eq:anCH1}-\eqref{eq:anCH5}. 

This paper is organized as follows. In Section~\ref{sec:prelim}, we state the assumptions and preliminaries. In Section~\ref{sec:Existence}, we show an existence result for the nonlocal Cahn-Hilliard equations \eqref{eq:nlCH1}-\eqref{eq:nlCH4}, which in particular guarantees existence for sufficiently small $ \eps >0 $.
In Section~\ref{sec:NonlocalLimit}, we show the convergence in a suitable sense 
of weak solutions to the nonlocal Cahn-Hilliard equations \eqref{eq:nlCH1}-\eqref{eq:nlCH4} to those of the corresponding anisotropic Cahn-Hilliard equation \eqref{eq:anCH1}-\eqref{eq:anCH5}. In Section~\ref{sec:NSCH}, we consider a diffuse interface model or two-phase flows with unmatched densities with nonlocal energies.
We show convergence of weak solutions to that system to those of the system 
with local energies by applying the result of Section~\ref{sec:NonlocalLimit}. Here the main novelty is that we can treat more singular kernels than in the author's contribution \cite{AbelsTerasawaNonlocalToLocal}.

\section{Assumptions and Preliminaries}\label{sec:prelim}

We first state assumptions for the kernels $k_\eps$ corresponding to the nonlocal Cahn-Hilliard equation \eqref{eq:nlCH1}-\eqref{eq:nlCH4}. We aim at imposing conditions as weak as possible for ensuring the existence and nonlocal-to-local convergence of their solutions. The conditions are variants of those in Gounoue, Kassmann and Voigt~\cite{GounoueKassmannVoigt}. 
Throughout this contribution $k_\eps\colon \Rd\setminus\{0\}\to [0,\infty)$, $\eps\in (0,1)$, satisfies the following assumptions:
\begin{enumerate}
\item[(A1)] $k_\eps (-z)=k_\eps (z)$ for all $\eps \in (0,1)$ and $z\in \R^d\setminus \{0\}$.
\item[(A2)] For some $c_0,C_0>0$ and $\underline{A}, \overline{A}\in \R^{d\times d}$ invertible we have
\begin{equation*}
  c_0\underline{k}_{\eps}(\underline{A}z)\leq k_\eps(z)\leq C_0\overline{k}_{\eps}(\overline{A}z)\quad \text{for all }z\in\R^d, z\neq 0, 
\end{equation*}
where $\underline{k}_\eps, \overline{k}_\eps\colon \Rd\setminus\{0\} \to [0,\infty)$ are radially symmetric kernels satisfying
\begin{enumerate}
\item $\int_{\R^d} \underline{k}_\eps (z) \min (1,|z|^2) \,d z=\int_{\R^d} \overline{k}_\eps (z) \min (1,|z|^2) \,d z=1$ for all $\eps\in (0,1)$.
\item $\lim_{\eps\to 0} \int_{|z|\geq \delta} \underline{k}_\eps (z)\, dz=\lim_{\eps\to 0} \int_{|z|\geq \delta} \overline{k}_\eps (z)\, dz=0$ for all $\delta>0$.
\end{enumerate}
\item[(A3)] For every $i,j\in \{1,\ldots, d\}$ the limit
\begin{alignat}{2}\label{eq:ajk}
  \lim_{\eps\to 0} \int_{\R^d} k_\eps (z)z_iz_j \, dz &=: a_{j,k}
\end{alignat}
exists and $A= (a_{j,k})_{j,k=1}^d$.
\end{enumerate}
\begin{remark}
The assumptions above are those of Gounoue, Kassmann and Voigt~\cite{GounoueKassmannVoigt}
except for the modification of the matrix $\overline{A}$ and $\underline{A}$ in (A2). 
The following kernels $k_{\eps}$, $\eps\in (0,1)$, satisfy the assumptions (A1)-(A3).
\begin{align*}
  k_\eps(z) =
  \begin{cases}
    \eps^{-d-2}k(\tfrac{z}\eps) &\text{if }0<|z|\leq \eps,\\
    \eps^{-d}|z|^{-2}k(\tfrac{z}\eps) &\text{if }\eps <|z|\leq 1,\\
    \eps^{-d}k(\tfrac{z}\eps) &\text{if }1<|z|,
  \end{cases}
\end{align*}
where
\begin{alignat}{1}
      k(z)&=k(-z)\,, \label{k-ass-one} \\
%    &\quad |\partial_z^\delta k(x,y,z)| \leqslant
%        C_{\delta}|z|^{-d-|\delta|} \, , \label{k-ass-two} \\
     c_0 |z|^{-d-\alpha} &\leqslant k(z)\leqslant C_0 |z|^{-d-\alpha} \,. \label{k-ass-three}
\end{alignat}
for all $z\in\Rn$, $z\neq 0$, where $\alpha$ is the order of the operator. 

The kernels $k_{\ep}$ defined in the following satisfy the assumptions (A1)-(A3). These kernels were treated in \cite{DavoliEtAlW11} and \cite{AbelsTerasawaNonlocalToLocal}
with the additional assumptions $ k_{\ep} \in W^{1, 1}(\mathbb{R}^d)$ and
nonlocal-to-local convergence of weak solutions for 
 nonlocal Cahn-Hilliard equations with these kernels was shown:
$$
k_{\ep}(x) := \frac{\eta_{\ep}(|x|)}{|x|^{2}} \quad \text{for all }x\in\R^d\setminus\{0\},
$$ 
where $(\eta_{\ep})_{\ep >0}$ is a family of mollifiers with the following properties:
\begin{alignat}{2}
&\eta_{\ep} \colon \mathbb{R} \to [0, +\infty),~~~\eta_{\ep} \in L^{1}_{loc}(\mathbb{R}),&~~~\eta_{\ep}(r) = \eta_{\ep}(-r)~~~~\text{for all }r \in \mathbb{R},\ep>0; \nonumber\\
&\int_{0}^{+\infty} \eta_{\ep}(r)r^{d-1}\,dr = \frac{2}{C_d}~~~~\text{for all }\ep >0, && \nonumber \\
&\lim_{\ep \rightarrow 0+} \int_{\delta}^{+\infty} \eta_{\ep}(r)r^{d-1}\,dr = 0~~~\text{for all }\delta > 0,&& \nonumber 
\end{alignat} 
where $C_d:= \int_{S^{d-1}} |e_1 \cdot \sigma|^2 d \mathcal{H}^{d-1}(\sigma)$. 

Various examples of $k_\eps$ satisfying
(A1)-(A3) can be found in \cite[Section~4]{GounoueKassmannVoigt}.
If we consider the kernel $k_{\ep}(B \cdot)$, where $k_{\ep}$ is radially symmetric and satisfies (A1)-(A3) and $ B $ is an invertible matrix,
it is not radially symmetric in general and satisfies (A1)-(A3).
\end{remark}

\begin{remark}
  We note that the assumptions imply
  \begin{align*}
    &\sup_{\eps \in (0,1)}\int_{\R^d} k_\eps(z) \min (1,|z|^2)\, dz \leq C <\infty\\
    &\lim_{\eps\to 0} \int_{|z|\geq \delta}k_\eps (z) =0\qquad \text{for all }\delta>0
  \end{align*}
  for some constant $C$ depending on $C_0$ above and $\ol{A}$. In particular this implies the existence of the limit in \eqref{eq:ajk} for a suitable subsequence.
\end{remark}
Moreover, we assume that $\ve_\eps \in L^2(\Omega\times(0,T))^d$ for every $\eps\in (0,1)$ and that $\ve_{\eps}$ converges weakly to $\ve$
in $ L^2(\Omega\times (0,T))^d$ for some $\ve\in L^2(\Omega\times (0,T))^d$.
%We show the existence of weak solutions of \eqref{eq:nlCH1}-\eqref{eq:nlCH4} at least for sufficiently small $\eps>0$.
%We also prove the convergence of weak solutions of\eqref{eq:nlCH1}-\eqref{eq:nlCH4}
%to a corresponding anisotropic local Cahn-Hilliard equation 
%\eqref{eq:anCH1}-\eqref{eq:anCH5}
%if we take a suitable subsequence. The reason for considering the convective Cahn-Hilliard equation is that 
%we apply the result to a diffuse interface model for two-phase flows later. In fact, we
%prove the convergence of weak solutions to a diffuse interface model for two-phase flows with unmatched densities and nonlocal free energies to those with %local free energies as in A., Depner, and Garcke~\cite{AbelsDepnerGarcke}. 
% Possibly one should also assume that $\LL_\eps$ are translation invariant, i.e., $k_\eps(x,y,z)=k_\eps(z)$ are independent of $x,y$.

For the following, we denote
\begin{align*}
    V_\eps&:=\left\{u\in L^2(\Omega): \int_\Omega\int_\Omega (u(x)-u(y))^2 k_\eps (x-y)\,dx \, dy <\infty\right\},\\
    \|u\|_{V_\eps}&:= \left(\|u\|_{L^2(\Omega)}^2 + \int_\Omega\int_\Omega (u(x)-u(y))^2 k_\eps (x-y)\,dx \, dy\right)^{\frac12}
\end{align*}
and
\begin{alignat*}{2}
 \mathcal{B}_\eps(u, v)&:= \frac12\int_\Omega\int_\Omega (u(x)-u(y))(v(x)-v(y)) k_\eps (x-y) \,d x\, dy&\quad&\text{for }u, v \in V_\eps,\\
  \E_\eps(u)&:= \frac12 \mathcal{B}_{\eps}(u, u), &\quad&\text{for }u\in V_\eps,\\
  \mathcal{B}_0(u, v) &:=  \int_\Omega (A\nabla u(x)) \cdot \nabla v(x)\,d x,&&\text{for }u, v\in H^1(\Omega),\\
  \E_0(u) &:= \frac12\mathcal{B}_0(u, u) &\quad&\text{for }u\in H^1(\Omega),
\end{alignat*}
where $A= (a_{j,k})_{j,k=1}^d$. Here $ V_{\eps} $ is a separable Hilbert space as shown in \cite[Proposition 2.6]{GounoueKassmannVoigt}.

%The following is a corresponding anisotropic Cahn-Hilliard equation.
%\begin{alignat}{2}
%    \partial_t c +\Div (\ve  c) &= \Div (m(c)\nabla \mu)\ &\qquad&\text{in}\ \Omega\times (0,T), \label{eq:anCH1}\\
%    \mu &= - \Div (A \nabla c) + f'(c)&\qquad&\text{in}\ \Omega\times (0,T), \label{eq:anCH2}\\
%    \mathbf{n} \cdot (A \nabla c)|_{\partial \Omega} &= 0  &\qquad&\text{on}\ \partial\Omega\times (0,T), \label{eq:anCH3}\\     
%    m(c)\partial_{n} \mu|_{\partial\Omega}  &=\mathbf{n}\cdot \ve c|_{\partial\Omega}&\qquad&\text{on}\ \partial\Omega\times (0,T), \label{eq:anCH4}\\
%    c|_{t=0}&= c_{0} &&\text{in}\ \Omega, \label{eq:anCH5}
%\end{alignat}

We note that a pair $ (c, \mu) $ is a weak solution of \eqref{eq:anCH1}-\eqref{eq:anCH5} if $c\in BC_w([0,T];L^2(\Omega))\cap L^\infty(0,T;V_\eps)\cap W^1_2(0,T;(H^1(\Omega))')$ with $f'(c)\in L^2(\Omega\times (0,T))$ (in particular $c(x,t)\in (-1,1)$ almost everywhere), $\mu\in L^2(0,T;H^1(\Omega))$, and
\begin{alignat}{1}\label{eq:nCHweak}
  \langle \partial_t c(t), \varphi\rangle - \int_\Omega\ve(t) c(t)\cdot \nabla \varphi\,dx = -\int_\Omega m(c(t)) \nabla \mu(t)\cdot \nabla \varphi\,dx 
\end{alignat}
for all $\varphi \in H^1(\Omega)$ and for almost every $t\in (0,T)$ as well as
\begin{equation*}
  \int_\Omega \mu(t)\varphi \,dx = \mathcal{B}_0 (c(t), \varphi) + \int_\Omega f'(c(t))\varphi \,dx 
\end{equation*}
for all $\varphi \in H^1(\Omega)$ and for almost every $t\in (0,T)$ and $c|_{t=0}=c_0$.

An essential point in the analysis is to generalize the following lemma from \cite[Lemma~3.3]{DavoliEtAlW11} to this anisotropic situation:
\begin{lemma}\label{lem:3.3}
  For every $\varphi, \zeta\in H^1(\Omega)$ it holds that
  \begin{align*}
    \lim_{\eps\to 0} \E_\eps(\varphi) &= \E_0(\varphi),\quad
    \lim_{\eps \to 0} \mathcal{B}_\eps (\varphi, \zeta) = \mathcal{B}_0 (\varphi, \zeta).
  \end{align*}
  Moreover, the matrix $A$ is symmetric and positive definite.
  Furthermore, for every sequence $(\varphi_n)_{n \in \mathbb{N}}\subseteq L^2(\Omega)$, $\eps_n>0$, $n\in\N$, tending to zero and $\varphi \in L^2(\Omega)$ it holds that
  \begin{alignat*}{2}
    \sup_{n\in\N} \E_{\eps_n} (\varphi_n) <+\infty & \quad \Rightarrow &\quad& (\varphi_n)_{n \in \mathbb{N}} \text{ is relatively compact in } L^2(\Omega),\\
    \sup_{n\in\N} \E_{\eps_n} (\varphi_n) <+\infty,~~\varphi_n \to_{n \rightarrow \infty} \varphi \quad \text{in } L^2(\Omega) &\quad \Rightarrow & \quad& \varphi \in H^1(\Omega) ~~\mathrm{and}~~\E_0(\varphi) \leq \liminf_{n \rightarrow \infty} \E_{\eps_n} (\varphi_n).
  \end{alignat*}
\end{lemma}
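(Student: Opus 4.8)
The plan is to treat the four assertions in turn, in each case reducing the anisotropic, non-symmetric situation to a radial one by means of the two-sided bound (A2).

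\emph{Pointwise convergence.} I would first prove $\lim_{\eps\to0}\mathcal{B}_\eps(\varphi,\varphi)=\mathcal{B}_0(\varphi,\varphi)$ for $\varphi\in C^2(\overline\Omega)$ and then pass to general $\varphi\in H^1(\Omega)$; the bilinear statement follows by polarization, $\mathcal{B}_\eps(\varphi,\zeta)=\tfrac14\big(\mathcal{B}_\eps(\varphi+\zeta,\varphi+\zeta)-\mathcal{B}_\eps(\varphi-\zeta,\varphi-\zeta)\big)$, since each quadratic term converges. For smooth $\varphi$, Taylor's theorem gives $\varphi(x)-\varphi(y)=\nabla\varphi(x)\cdot(x-y)+R(x,y)$ with $|R(x,y)|\le C|x-y|^2$. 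Substituting into $\mathcal{B}_\eps$ and changing variables $h=x-y$, the quadratic leading term reduces, by evenness (A1), to an average of $(\nabla\varphi(x)\cdot h)^2$ against $k_\eps(h)$; by (A3) the relevant second moments $\int k_\eps(h)h_ih_j\,dh$ converge to the entries of $A$, so this leading term tends to $\mathcal{B}_0(\varphi,\varphi)=\int_\Omega(A\nabla\varphi)\cdot\nabla\varphi\,dx$. The boundary layer $\{x:\mathrm{dist}(x,\partial\Omega)\le\delta\}$ and the truncation to $\{|h|\le\delta\}$ are controlled by the concentration property $\int_{|h|\ge\delta}k_\eps\to0$, while the cross and remainder terms are estimated via $\int_{|h|\le1}k_\eps(h)|h|^3\,dh\le\delta\sup_\eps\int k_\eps(h)\min(1,|h|^2)\,dh+o(1)\to0$, using the uniform bound recorded in the Remark. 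To pass to $H^1(\Omega)$ I would use the density of $C^2(\overline\Omega)$ together with the uniform estimate $\mathcal{B}_\eps(\varphi,\varphi)\le C\|\varphi\|_{H^1}^2$, which follows from the upper bound $k_\eps(z)\le C_0\overline{k}_\eps(\overline{A}z)$ in (A2) after an affine change of variables reducing to the radial Ponce-type inequality.

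\emph{Symmetry and positive definiteness of $A$.} Symmetry is immediate from $a_{i,j}=\lim_{\eps\to0}\int k_\eps(z)z_iz_j\,dz=a_{j,i}$. For positive definiteness I would fix $\xi\in\Rd\setminus\{0\}$ and write $\xi^\top A\xi=\lim_{\eps\to0}\int k_\eps(z)(\xi\cdot z)^2\,dz$, then use the lower bound (A2), $\int k_\eps(z)(\xi\cdot z)^2\,dz\ge c_0\int\underline{k}_\eps(\underline{A}z)(\xi\cdot z)^2\,dz$. The substitution $w=\underline{A}z$ and the radial symmetry of $\underline{k}_\eps$ turn the right-hand side into $c_0|\det\underline{A}|^{-1}d^{-1}\,|\underline{A}^{-\top}\xi|^2\int\underline{k}_\eps(w)|w|^2\,dw$, and $\int\underline{k}_\eps(w)|w|^2\,dw\ge\int\underline{k}_\eps(w)\min(1,|w|^2)\,dw=1$ by (A2)(a). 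Since $\underline{A}$ is invertible, passing to the limit gives $\xi^\top A\xi\ge c|\xi|^2>0$.

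\emph{Compactness.} Given $\eps_n\to0$ and $\sup_n\E_{\eps_n}(\varphi_n)<\infty$, the lower bound (A2) yields $\E_{\eps_n}(\varphi_n)\ge c\,\widetilde{\E}_{\eps_n}(\varphi_n)$, where $\widetilde{\E}_{\eps_n}$ is the energy built from the radial kernel $\underline{k}_{\eps_n}(\underline{A}\,\cdot)$. After the affine change of variables $x\mapsto\underline{A}^{-1}x$ this becomes a genuinely radially symmetric nonlocal energy on the transformed domain, with kernels obeying the normalization and concentration in (A2); together with the uniform $L^2$-bound on $(\varphi_n)$, the classical nonlocal (Rellich-type) compactness theorem for radial kernels (\cite{PonceNonlocalLocalConvergence,GounoueKassmannVoigt}) then gives relative compactness in $L^2$ of the transformed sequence, and transforming back yields relative compactness of $(\varphi_n)$ in $L^2(\Omega)$.

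\emph{Lower semicontinuity.} Suppose $\varphi_n\to\varphi$ in $L^2(\Omega)$ with $\sup_n\E_{\eps_n}(\varphi_n)<\infty$. The same lower-bound reduction, combined with the Bourgain--Brezis--Mironescu/Ponce characterization of $H^1$ (\cite{BourgainBrezisMironescu,PonceNonlocalLocalConvergence}), shows $\varphi\in H^1(\Omega)$. For the sharp inequality $\E_0(\varphi)\le\liminf_n\E_{\eps_n}(\varphi_n)$ I would use a mollification--contraction argument: on a subdomain $\Omega'\Subset\Omega$ and for a standard mollifier $\rho_\tau$ with $\tau$ so small that $\rho_\tau*\varphi_n$ is well defined on $\Omega'$ from the values of $\varphi_n$ in $\Omega$, Jensen's inequality together with the translation invariance of $k_\eps$ gives $\E_{\eps_n}^{\Omega'}(\rho_\tau*\varphi_n)\le\E_{\eps_n}(\varphi_n)$. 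Since $\rho_\tau*\varphi_n\to\rho_\tau*\varphi$ in $C^2(\overline{\Omega'})$, the pointwise-convergence step—made uniform along $C^2$-convergent sequences through $|\mathcal{B}_{\eps_n}(\psi,\psi)-\mathcal{B}_{\eps_n}(\chi,\chi)|\le\mathcal{B}_{\eps_n}(\psi-\chi,\psi-\chi)^{1/2}\mathcal{B}_{\eps_n}(\psi+\chi,\psi+\chi)^{1/2}$ and the uniform bound from (A2)—gives $\lim_n\E_{\eps_n}^{\Omega'}(\rho_\tau*\varphi_n)=\E_0^{\Omega'}(\rho_\tau*\varphi)$. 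Hence $\liminf_n\E_{\eps_n}(\varphi_n)\ge\E_0^{\Omega'}(\rho_\tau*\varphi)$, and letting $\tau\to0$ (so $\rho_\tau*\varphi\to\varphi$ in $H^1(\Omega')$) and then $\Omega'\uparrow\Omega$ (using $A>0$ and monotone convergence) yields the claim. I expect this last step to be the main obstacle: the Jensen contraction and the smooth-function limit are naturally set on all of $\Rd$, whereas the energies live on the bounded domain $\Omega$, so the mollification must be localized to interior subdomains and the boundary contribution recovered by the exhaustion $\Omega'\uparrow\Omega$; keeping the exact anisotropic matrix $A$ (rather than a mere two-sided bound) throughout this localization is the delicate point that distinguishes the present case from the radial setting of \cite{DavoliEtAlW11}.
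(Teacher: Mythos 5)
Your proposal is correct and, for the two substantive parts of the lemma (compactness and the liminf inequality), follows essentially the same strategy as the paper: compactness is obtained by bounding $\E_{\eps_n}$ from below by a radially symmetric energy via (A2), performing the affine change of variables $x\mapsto \underline{A}^{-1}x$, and invoking Ponce's nonlocal Rellich theorem; the lower semicontinuity is obtained by mollifying on an interior subdomain, using Jensen's inequality and translation invariance to contract the energy, identifying the limit of the smooth energies through Taylor expansion and the second moments of $k_{\eps_n}$, and then exhausting $\Omega$ (the paper's $\Omega_\delta$, $\eta_\alpha$ play the role of your $\Omega'$, $\rho_\tau$). You differ in two places. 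First, you prove the pointwise convergence $\E_\eps(\varphi)\to\E_0(\varphi)$ directly by Taylor expansion and density of $C^2(\overline\Omega)$, whereas the paper simply cites Theorem~3.4 of Gounoue--Kassmann--Voigt; your sketch is the standard proof of that result and is sound. Second, and more interestingly, you prove positive definiteness of $A$ directly from $\xi^\top A\xi=\lim_\eps\int k_\eps(z)(\xi\cdot z)^2\,dz$, the lower bound in (A2), radial symmetry of $\underline{k}_\eps$ and the normalization (A2)(a); the paper instead derives $C\|\nabla\varphi\|_{L^2}^2\leq \E_0(\varphi)$ from the Bourgain--Brezis--Mironescu machinery and tests with linear functions. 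Your route is more elementary and self-contained. One caveat shared with the paper: the compactness conclusion as stated needs a uniform $L^2$-bound on $(\varphi_n)_{n\in\N}$ (constants have zero energy), which you correctly flag as an input to Ponce's theorem but which is not literally among the hypotheses; in all applications in the paper this bound is available, so this is a matter of bookkeeping rather than a gap in your argument.
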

\begin{proof}
The first assertion that $\lim_{\eps\to 0} \E_\eps(\varphi) = \E_0(\varphi)$ holds for every $\varphi, \zeta\in H^1(\Omega)$ is proven in Theorem 3.4 in \cite{GounoueKassmannVoigt}. The second assertion that $ \lim_{\eps \to 0} \mathcal{B}_\eps (\varphi, \zeta) = \mathcal{B}_0 (\varphi, \zeta) $ holds for every $\varphi, \zeta\in H^1(\Omega)$ easily follows from the first using the polarization identity.

Next we prove that for all sequences $(\varphi_n)_{n\in\N} \subseteq L^2(\Omega)$, $(\eps_n)_{n\in\N}$, $0<\eps_n\to_{n\to\infty} 0$, it holds that 
$$ 
\sup_{n\in\N} \E_{\eps_n} (\varphi_n) <+\infty \quad \Rightarrow \quad (\varphi_n)_{n\in\N} \text{ is relatively compact in } L^2(\Omega).
$$
First we give the proof in the case that $\underline{A}$ is the identity matrix.
Since 
$$ 
\frac{1}{4} \int_{\Omega} \int_{\Omega} (\varphi_\ep(x) - \varphi_\ep(y))^2 k_{\ep}(x-y)\,dx\,dy \leq C 
$$ 
for some $C>0$ and for any $0<\ep<1$, we have
$$ 
\int_{\Omega} \int_{\Omega} \frac{\underline{k_{\ep}}(x-y)}{|x-y|^2}  \min(1, |x-y|^2) (\varphi_\ep(x) - \varphi_\ep(y))^2 \,dx\,dy \leq \frac{C}{c_0}
$$
for any $0<\ep<1$. 
We know from the assumptions that
$\int_{\mathbb{R}^d} \underline{k_{\ep}}(x) \min(1, |x|^2) \,dx =1$ for any $0<\ep<1$ and $ \lim_{\ep \rightarrow 0} \int_{|x| > \delta} \underline{k_{\ep}}(x) \min(1, |x|^2) = 0 $ for any $\delta>0$. Then using the first statement in \cite[Theorem~1.2]{PonceNonlocalPoincare}, we have that  
$(\varphi_n)_{n\in\N}$ is relatively compact in $L^2(\Omega)$.
When $\underline{A}$ is a general invertible matrix, we have $(\varphi_{n} (\underline{A}^{-1} \cdot))_{n\in\N}$ is relatively compact in $L^2(\underline{A} \Omega)$, where $\underline{A} \Omega $ is a domain obtained by applying the linear map $ \underline{A} $ to $ \Omega $.
Similarly as above we use change of variables in the integration and conclude that $(\varphi_{n})_{n\in\N}$ is relatively compact in $L^2(\Omega)$. 

By the same arguments using the result in \cite{BourgainBrezisMironescu} (cf. \cite[Lemma~3.3]{DavoliEtAlW11}) we can show that there exists some positive constant 
$ C>0 $ such that $ C \int_{\Omega} | \nabla \varphi(x) |^2 \, dx \leq \E_0(\varphi) $ holds for $ \varphi \in H^1(\Omega) $. This easily implies that $A$ is positive definite by taking suitable linear functions $\varphi$.

We finally prove the last assertion  
$$ \sup_{n\in\N} \E_{\eps_n} (\varphi_n) <+\infty,~~\varphi_n \to_{n \rightarrow \infty} \varphi \quad \text{in } L^2(\Omega) \quad \Rightarrow  \quad \E_0(\varphi)\leq \liminf_{n \rightarrow \infty} \E_{\eps_n} (\varphi_n).$$
This assertion is due to Foghem Gounoue, Kassmann and Voigt \cite{GounoueKassmannVoigt} and Voigt's thesis \cite{DissertationVoigt}.
See Theorem 3.4 and Lemma 3.6 in \cite{GounoueKassmannVoigt}.
The result in a more general setting is proven also in Voigt's thesis \cite{DissertationVoigt}.
We note that in this thesis it is claimed that the result also holds for nontranslation-invariant kernel, but the proof seems to be valid only in the translation invariant case.
We give here a proof of the assertion for completeness. 
The argument below closely follows that in \cite{DissertationVoigt}.\\

Let $ \varphi \in L^2(\Omega) $ and $ (\varphi_n)_{n \in \mathbb{N}} $ with
$\| \varphi_n - \varphi \|_{L^2(\Omega)} \rightarrow 0$ as $ n \rightarrow \infty$. 
Let $ f \in L^2(\Omega) $, $\alpha>0$ and $ \delta>0 $ be arbitrary.
We then define $ f^{\alpha}(x) = e_0 f * \eta_{\alpha}(x) $, where 
$ \eta_{\alpha}$ is a smooth positive, radially symmetric mollifier having support in $B_{\alpha}(0)$. Here $ e_0 f $ is the zero extension of $f$ outside of $\Omega$.
Then $ \varphi^{\alpha}, \varphi_n^{\alpha} \in C^{\infty}(\mathbb{R}^d) $ and
$ \varphi^{\alpha}_n \rightarrow_{n\to \infty} \varphi^{\alpha} $ in $C^k(\mathbb{R}^d)$ for
any $ k \in \mathbb{N}$. 
We define 
\begin{align*}
\Omega_{\delta} = \{ x \in \Omega~|~\operatorname{dist} (x, \partial \Omega)> \delta \}.
\end{align*} 
If we use Jensen's inequality, Fubini's theorem and a change of variables we obtain for all
$ n \in \mathbb{N} $ and $ \alpha \in (0, \delta) $
\begin{align*}
&\int_{\Omega_{\delta}} \int_{\Omega_{\delta}} (\varphi_n^{\alpha}(x) - \varphi_n^{\alpha}(y))^2 k_{\ep_n}(x-y)\,dy\,dx \\ 
&= \int_{\Omega_{\delta}} \int_{\Omega_{\delta}} \left( \int_{B_{\alpha}} (\varphi_n(x-z) - \varphi_n(y-z)) \eta_{\alpha}(z) \,dz \right)^2 k_{\ep_n}(x-y)\,dy\,dx \\
& \leq \int_{B_{\alpha}} \int_{\Omega_{\delta}} \int_{\Omega_{\delta}} (\varphi_n(x-z) - \varphi_n(y-z))^2 k_{\ep_n}(x-y) \,dy\,dx\,\eta_{\alpha}(z)\,dz \\
&\leq \int_{B_{\alpha}} \int_{\Omega} \int_{\Omega} (\varphi_n(x') - \varphi_n(y'))^2 k_{\ep_n}(x'-y')\,dy'\,dx'\,\eta_{\alpha}(z)\,dz \\
&= \int_{\Omega} \int_{\Omega} (\varphi_n(x') - \varphi_n(y'))^2 k_{\ep_n}(x'-y')\,dy'\,dx'= 2 \E_{\ep_n}(\varphi_n).
\end{align*}
By Taylor's formula, for each $\alpha \in (0, \delta)$  there exist some positive constants $C^{\delta}_n$ and $C^{\delta}$ depending on $\alpha$ such that
\begin{align*}
\left| \varphi^{\alpha}_n(x) - \varphi^{\alpha}_n(y) - 
\sum_{i=1}^d \partial_{x_i} \varphi^{\alpha}_n(x) (x-y)_i\right| \leq C^{\delta}_n |x-y|^2 \\
\left| \varphi^{\alpha}(x) - \varphi^{\alpha}(y) - \sum_{i=1}^{d} \partial_{x_i} \varphi^{\alpha}(x)(x-y)_i \right| \leq C^{\delta} |x-y|^2 
\end{align*}
where $C^{\delta}_n \rightarrow C^{\delta}$ as $ n \rightarrow \infty $
for $x, y \in \Omega_{\delta} $ and $|x - y| \leq \frac{\delta}2$. 
Thus we have
\begin{align*}
&\liminf_{n \rightarrow \infty} 2 \E_{\ep_n}(\varphi_n) 
 \geq \liminf_{n \rightarrow \infty} \int_{\Omega_{\delta}} \int_{\Omega_{\delta}} (\varphi^{\alpha}_n(x) - \varphi^{\alpha}_n(y))^2 k_{\ep_n}(x-y)\,dy\,dx \\
&\geq \int_{\Omega_{\delta}} \liminf_{n \rightarrow \infty} \sum_{i,j=1}^d
\partial_{x_i} \varphi^{\alpha}_n(x) \partial_{x_j} \varphi^{\alpha}_n(x) \int_{|x-y|\leq \frac{\delta}2 \cap \Omega_{\delta}} (x-y)_i (x-y)_j k_{\ep_n}(x-y)\,dy\,dx \\ 
&= \int_{\Omega_{\delta}} \sum_{i, j=1}^d a_{i, j} \partial_{x_i} \varphi^{\alpha}(x) \partial_{x_j} \varphi^{\alpha}(x)\,dx. 
\end{align*}
Using $A=(a_{i, j})_{i,j=1}^d$, the above inequality is equivalent to
\begin{align*}
\liminf_{n \rightarrow \infty} \E_{\ep_n}(\varphi_n) \geq \frac12 \int_{\Omega_{\delta}} ( A \nabla \varphi^{\alpha}(x))\cdot \nabla \varphi^{\alpha}(x)  \,dx
\end{align*} 
Since 
$ \sup_{n\in\N} \E_{\eps_n} (\varphi_n) <+\infty $, using the estimate of the kernel from below and the second statement in \cite[Theorem~1.2]{PonceNonlocalPoincare}, we have
$ \varphi(\underline{A}^{-1} \cdot) \in H^1(\underline{A} \Omega) $. Hence we have
$ \varphi \in H^1(\Omega) $. Thus $\nabla \varphi^{\alpha} \rightarrow \nabla \varphi $ in $L^2(\Omega_{\delta})$ as
$ \alpha \in (0, \delta) $ tends to zero. 

Because of $ \varphi \in H^1(\Omega) $, by absolute continuity of the Lebesgue integral, for any $ \sigma>0 $ there is some $ \delta>0 $ such that for
\begin{align*}
\frac12 \int_{\Omega \setminus \Omega_{\delta}} ( A \nabla \varphi(x))\cdot \nabla \varphi(x) \,dx < \sigma
\end{align*}
Altogether, 
\begin{align*}
 \liminf_{n \rightarrow \infty} \E_{\eps_n} (\varphi_n) \geq \E_0(\varphi) - \sigma. 
\end{align*}
Since $\sigma$ is arbitrarily small, the assertion follows.
\end{proof}

% \begin{remarks}
%   \begin{enumerate}
%   \item In the case 2 this should essentially follow from Theorem~1.6 in ``G.F. Foghem Gounoue, M. Kassmann and P. Voigt / Nonlinear Analysis 193 (2020) 111504'' in the translation invariant case and possible from the PhD-thesis of Voigt in the general case.
%   \item The case 2 does not seem to be covered by Davoli's results, see remark at bottum of page 15 ``A remake of Bourgain, Brezis, Mironescu characterization of
% Sobolev spaces'', Guy Fabrice Foghem Gounoue. But the issue there is the behaviour of the kernel $k_\eps(x,y,z)$ for $|z|\to \infty$, which is irrelevant in the case of a bounded domain, when one can assume that $k$ is compactly supported.
%   \end{enumerate}
% \end{remarks}

Using Theorem \ref{lem:3.3}, we can prove the following lemma completely analogously to Davoli, Ranetbauer, Scarpa, and Trussardi \cite{DavoliEtAlNonlocalLocalCHPeriodic}. Actually, it also follows by a simple comparison with the radially symmetric case (i.e., (A2)) and change of variables similarly as before. 
\begin{lemma}\label{lem:3.4}
For any $\delta>0$, there exists some $C_{\delta}>0$ and $\eps_{\delta}>0$ such that for any $\eps_1, \eps_2 \in (0, \eps_{\delta})$ and $\varphi_1,\varphi_2\in L^2(\Omega)$ we have
\begin{equation}
\| \varphi_{1} - \varphi_{2} \|^2_{L^2(\Omega)} \leq \delta (\E_{\eps_1}(\varphi_{1}) + \E_{\eps_2}(\varphi_{2}))
+ C_{\delta} \| \varphi_{1} - \varphi_{2} \|^2_{(H^1(\Omega))'}. 
\end{equation}  
\end{lemma}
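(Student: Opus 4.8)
The plan is to argue by contradiction and to let the relative compactness already proved in Lemma~\ref{lem:3.3} do all the work. Fix $\delta=\delta_0>0$ and suppose the asserted estimate fails for this $\delta_0$. Taking the free constant equal to $n$ and the threshold equal to $1/n$ in the negation of the statement, I obtain parameters $\eps_1^n,\eps_2^n\in(0,1/n)$ (hence both tending to $0$) and functions $\varphi_1^n,\varphi_2^n\in L^2(\Omega)$ with
\begin{equation*}
\|\varphi_1^n-\varphi_2^n\|_{L^2(\Omega)}^2 > \delta_0\big(\E_{\eps_1^n}(\varphi_1^n)+\E_{\eps_2^n}(\varphi_2^n)\big)+n\,\|\varphi_1^n-\varphi_2^n\|_{(H^1(\Omega))'}^2 .
\end{equation*}
Since every term is homogeneous of degree two under the joint scaling $(\varphi_1^n,\varphi_2^n)\mapsto\lambda(\varphi_1^n,\varphi_2^n)$, I may normalize so that $\|\varphi_1^n-\varphi_2^n\|_{L^2(\Omega)}=1$ for every $n$; here $\varphi_1^n\neq\varphi_2^n$ necessarily, since otherwise the left-hand side would vanish while the right-hand side is nonnegative.

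From the normalized inequality I read off two consequences: first, $\E_{\eps_1^n}(\varphi_1^n)+\E_{\eps_2^n}(\varphi_2^n)<\delta_0^{-1}$, so each energy sequence is bounded; second, $\|\varphi_1^n-\varphi_2^n\|_{(H^1(\Omega))'}^2<1/n\to 0$, so $\varphi_1^n-\varphi_2^n\to 0$ in $(H^1(\Omega))'$. Because $\eps_1^n\to 0$ and $\sup_n\E_{\eps_1^n}(\varphi_1^n)<\infty$, the relative compactness part of Lemma~\ref{lem:3.3} yields a subsequence along which $\varphi_1^n\to\varphi_1$ in $L^2(\Omega)$; applying the same statement to $(\varphi_2^n)$ along that subsequence (using $\eps_2^n\to 0$ and the energy bound) gives a further subsequence with $\varphi_2^n\to\varphi_2$ in $L^2(\Omega)$.

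Along this common subsequence $\varphi_1^n-\varphi_2^n\to\varphi_1-\varphi_2$ in $L^2(\Omega)$, and hence also in $(H^1(\Omega))'$ by the continuous embedding $L^2(\Omega)\hookrightarrow(H^1(\Omega))'$. Comparing with the previously identified $(H^1(\Omega))'$-limit $0$ forces $\varphi_1=\varphi_2$, so that $\|\varphi_1^n-\varphi_2^n\|_{L^2(\Omega)}\to\|\varphi_1-\varphi_2\|_{L^2(\Omega)}=0$, contradicting the normalization $\|\varphi_1^n-\varphi_2^n\|_{L^2(\Omega)}=1$. This contradiction establishes the lemma.

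I expect the only delicate points to be purely organizational: setting up the negation so that both regularization parameters are genuinely driven to $0$ (which is exactly the hypothesis under which Lemma~\ref{lem:3.3} supplies compactness), and the joint-homogeneity normalization; the analytic core is entirely furnished by Lemma~\ref{lem:3.3}. As an alternative avoiding the contradiction argument, one can instead invoke assumption (A2) directly: estimating $k_\eps$ from below by $c_0\,\underline{k}_\eps(\underline{A}\,\cdot)$ and changing variables $x\mapsto\underline{A}x$ reduces the inequality to the radially symmetric situation on the domain $\underline{A}\Omega$, where it is precisely the estimate established in \cite{DavoliEtAlNonlocalLocalCHPeriodic}.
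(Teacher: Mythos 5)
Your overall route is exactly the one the paper has in mind: the paper gives no detailed argument, merely asserting that the lemma follows ``completely analogously to'' the compactness-and-contradiction proof of Davoli et al.\ (your main argument) or alternatively by comparison with the radially symmetric case via (A2) and a change of variables (your closing remark). So there is no divergence of method to report. The set-up of the negation, the diagonal choice $C=n$, $\eps_\ast=1/n$, and the degree-two homogeneity normalization are all correct.

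There is, however, one genuine gap in the compactness step. You invoke the relative-compactness assertion of Lemma~\ref{lem:3.3} for $(\varphi_1^n)_{n\in\N}$ and $(\varphi_2^n)_{n\in\N}$ separately, using only the bounds $\E_{\eps_i^n}(\varphi_i^n)\leq \delta_0^{-1}$. But an energy bound alone cannot yield relative compactness in $L^2(\Omega)$: constants have zero energy, so $\varphi_i^n\equiv n$ satisfies your hypotheses and has no convergent subsequence. (The statement of Lemma~\ref{lem:3.3} is phrased without an explicit $L^2$-bound, but its proof rests on Ponce's Theorem~1.2, which requires boundedness in $L^2$; the literal reading you rely on is not available.) Your normalization controls only $\|\varphi_1^n-\varphi_2^n\|_{L^2(\Omega)}=1$ and says nothing about $\|\varphi_i^n\|_{L^2(\Omega)}$ individually, so the argument as written fails precisely when both functions drift to infinity by a common constant. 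The repair is short: every term in the asserted inequality is invariant under $(\varphi_1,\varphi_2)\mapsto(\varphi_1+a,\varphi_2+a)$ for $a\in\R$, so one may additionally normalize $\int_\Omega\varphi_1^n\,dx=0$; the uniform nonlocal Poincar\'e inequality (obtained from (A2), a change of variables, and Ponce's estimate, exactly as in the compactness part of the proof of Lemma~\ref{lem:3.3}) then gives $\|\varphi_1^n\|_{L^2(\Omega)}^2\leq C\,\E_{\eps_1^n}(\varphi_1^n)\leq C\delta_0^{-1}$ for $n$ large, whence $\|\varphi_2^n\|_{L^2(\Omega)}\leq\|\varphi_1^n\|_{L^2(\Omega)}+1$. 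With these $L^2$-bounds in hand the compactness assertion applies legitimately and the remainder of your contradiction argument is correct.
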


\section{Existence of Solutions}\label{sec:Existence}

In the following we show existence of weak solution to \eqref{eq:nlCH1}-\eqref{eq:nlCH4} for some fixed $\mathcal{L}$ instead of $\mathcal{L}_\eps$ in a more general situation. More precisely, we consider weak solutions of
\begin{alignat}{2}
    \partial_t c +\Div (\we  c) &= \Div (m(c)\nabla \mu)\ &\qquad&\text{in}\ \Omega\times (0,T), \label{eq:nlCH1'}\\
    \mu &= \LL c + f'(c)&\qquad&\text{in}\ \Omega\times (0,T), \label{eq:nlCH2'}\\
    m(c) \partial_{\no} \mu|_{\partial\Omega}  &=\mathbf{n}\cdot \we c|_{\partial\Omega}&\qquad&\text{on}\ \partial\Omega\times (0,T), \label{eq:nlCH3'}\\
    c|_{t=0}&= c_{0} &&\text{in}\ \Omega \label{eq:nlCH4'}
\end{alignat}
in the sense that
\begin{alignat}{1}\label{eq:nCHweak}
  \langle \partial_t c(t), \varphi\rangle - \int_\Omega\we(t) c(t)\cdot \nabla \varphi\,dx = -\int_\Omega m(c(t)) \nabla \mu(t)\cdot \nabla \varphi\,dx 
\end{alignat}
for all $\varphi \in H^1(\Omega)$ and for almost every $t\in (0,T)$ as well as
\begin{equation}\label{eq:nCHweak2}
  \int_\Omega \mu(t)\varphi \,dx = \mathcal{B} (c(t), \varphi) + \int_\Omega f'(c(t))\varphi \,dx 
\end{equation}
for all $\varphi \in V$ and for almost every $t\in (0,T)$, where
\begin{align*}
  \mathcal{B} (u,v)&= \frac12\int_\Omega \int_\Omega k(x,y)(u(x)-u(y))(v(x)-v(y))\, dx\,dy\quad \text{for all }u,v\in V,\\
  V&:= \left\{u \in L^2(\Omega): \mathcal{E}(u)<\infty\right\}\quad \text{with}\\
  \|u\|_{V} &:= \left(\|u\|_{L^2(\Omega)}^2+ \mathcal{B}(u,u)\right)^{\frac12}.
\end{align*}
Here $V$ is a separable Hilbert space as shown in \cite{GounoueKassmannVoigt}.

For the following we assume that for some sufficiently small $\delta_0>0$ (which will be chosen in the proof of Theorem~\ref{thm:Existence} below) there is some $C(\delta_0)>0$ such that
\begin{equation}\label{eq:Ehrling}
  \|u\|_{L^2(\Omega)}^2 \leq \delta_0 \mathcal{E}(u) + C(\delta_0)\|u\|_{H^{-1}_{(0)}(\Omega)}^2\quad \text{for all }u\in V,
\end{equation}
where
\begin{equation*}
  \mathcal{E}(u) = \frac12\mathcal{B}(u,u) = \frac14 \int_\Omega\int_\Omega k(x-y) (u(x)-u(y))^2\,dx\, dy.
\end{equation*}

\begin{examples}
  \begin{enumerate}
  \item For $\E=\E_\eps$ and $k=k_\eps$ the assumption \eqref{eq:Ehrling} will hold true for $\eps>0$ sufficiently small because of Lemma~\ref{lem:3.4}.
  \item If $k_\eps$ is a ``singular kernel'' as in \eqref{k-ass-three}, then $\mathcal{E}(u)\geq c_0\|u\|_{H^{\alpha/2}(\Omega)}^2$ and \eqref{eq:Ehrling} follows from interpolation inequalities or Ehrling's lemma.
  \end{enumerate}
\end{examples}
\begin{theorem}\label{thm:Existence}
  Let $\Omega\subseteq \R^d$ be a bounded domain with $C^2$-boundary and $T>0$. Moreover, let $f\colon [-1,1]\to \R $ be continuous, twice continuously differentiable in $(-1,1)$ such that $-\kappa:=\inf_{s\in (-1,1)} f''(s)>-\infty$ and $m\colon [-1,1]\to (0,\infty)$ be continuous. Furthermore, let $k$ satisfy (A1)-(A2) (for fixed $\eps$). Then, if \eqref{eq:Ehrling} holds true for some $\delta_0\in (0,\frac1\kappa)$, then for any $c_{0}\in L^2(\Omega)$ with $c_0(x)\in [-1,1]$ almost everywhere, $m_\Omega:=\tfrac1{|\Omega|}\int_\Omega c_0(x)\, dx\in (-1,1)$ and
  \begin{equation*}
    E(c_0) := \frac14\int_\Omega\int_\Omega k(x,y)(c_0(x)-c_0(y))^2\,dx\, dy+\int_\Omega f(c_0(x))\, dx<\infty 
  \end{equation*}
  there is a weak solution $c\in BC_w([0,T];L^2(\Omega))\cap L^\infty(0,T;V)\cap W^1_2(0,T;(H^1(\Omega))')$ with $f'(c)\in L^2(\Omega\times(0,T))$, $\mu \in L^2(0,T;H^1(\Omega))$ of \eqref{eq:nlCH1'}-\eqref{eq:nlCH4'} satisfying the energy inequality
  \begin{align*}
    E(c(t))+\int_0^t \int_\Omega m(c)|\nabla \mu|^2\, dx\, dt\leq E(c_0) + \int_0^t \int_\Omega c\we\cdot \nabla \mu \,dx \,d\tau 
  \end{align*}
  for all $t\in [0,T]$.
\end{theorem}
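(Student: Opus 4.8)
\section*{Proof proposal}

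The plan is to construct a solution by a double approximation: regularize the singular potential, solve the resulting problem with a smooth potential by a Faedo--Galerkin scheme, and pass to the limit. Since $f''\geq-\kappa$ on $(-1,1)$, the function $\hat f(s):=f(s)+\tfrac{\kappa}{2}s^2$ is convex; I would extend it by $+\infty$ outside $[-1,1]$ and replace it by its Moreau--Yosida approximation $\hat f_\lambda$, $\lambda>0$, which is convex, $C^{1,1}(\R)$ with globally Lipschitz, nondecreasing derivative, bounded below by $\inf\hat f>-\infty$, and increases to $\hat f$ as $\lambda\to0$. I set $f_\lambda:=\hat f_\lambda-\tfrac{\kappa}{2}(\cdot)^2$, so that $f_\lambda''\geq-\kappa$ and $f_\lambda(s)\geq-\tfrac{\kappa}{2}s^2-C$, and extend $m$ to a continuous function on $\R$ with $0<m_0\leq m\leq M_0$. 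For fixed $\lambda$, existence of a pair $(c_\lambda,\mu_\lambda)$ solving \eqref{eq:nCHweak}--\eqref{eq:nCHweak2} with $f_\lambda$ in place of $f$ follows from a standard Galerkin approximation in finite-dimensional subspaces of $V$: the Galerkin system is solvable by Carath\'eodory's theorem, global-in-time bounds come from the energy estimate below, and Aubin--Lions compactness lets one pass to the limit in the Galerkin index.

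The heart of the proof lies in two uniform-in-$\lambda$ a priori estimates. Testing with $\mu_\lambda$ gives the energy identity $\tfrac{d}{dt}E_\lambda(c_\lambda)+\int_\Omega m(c_\lambda)|\nabla\mu_\lambda|^2\,dx=\int_\Omega c_\lambda\we\cdot\nabla\mu_\lambda\,dx$ with $E_\lambda(c)=\E(c)+\int_\Omega f_\lambda(c)\,dx$. Here the smallness condition $\delta_0<\tfrac1\kappa$ in \eqref{eq:Ehrling} is exactly what renders $E_\lambda$ coercive: using $f_\lambda(s)\geq-\tfrac{\kappa}{2}s^2-C$, the inequality \eqref{eq:Ehrling}, and conservation of the spatial mean (which controls $\|c_\lambda\|_{H^{-1}_{(0)}(\Omega)}$), one obtains $E_\lambda(c_\lambda)\geq\eta\,\E(c_\lambda)-C$ for some $\eta>0$. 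The transport term is then absorbed by Young's inequality against the coercive dissipation (using $m\geq m_0$) and Gronwall's lemma, yielding uniform bounds for $\E(c_\lambda)$ in $L^\infty(0,T)$ and for $\nabla\mu_\lambda$ in $L^2(\Omega\times(0,T))$.

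The second and more delicate estimate controls $f_\lambda'(c_\lambda)$ in $L^2(\Omega\times(0,T))$. The decisive structural fact is that $\mathcal{B}(u,g(u))\geq0$ for every nondecreasing $g$, since the integrand $k(x,y)(u(x)-u(y))(g(u(x))-g(u(y)))$ is pointwise nonnegative; applied to $g=\hat f_\lambda'$ this gives $\mathcal{B}(c_\lambda,\hat f_\lambda'(c_\lambda))\geq0$. Testing \eqref{eq:nCHweak2} with $f_\lambda'(c_\lambda)$ and writing $f_\lambda'=\hat f_\lambda'-\kappa\,(\cdot)$ then yields $\int_\Omega|f_\lambda'(c_\lambda)|^2\leq\int_\Omega\mu_\lambda f_\lambda'(c_\lambda)+2\kappa\,\E(c_\lambda)$. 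To close it I would control the mean of $\mu_\lambda$: testing \eqref{eq:nCHweak2} with the constant $1$ and using $\mathcal{B}(c_\lambda,1)=0$ gives $\int_\Omega\mu_\lambda=\int_\Omega f_\lambda'(c_\lambda)$, while a standard singular-potential bound, exploiting that the conserved mean $m_\Omega$ lies strictly inside $(-1,1)$, estimates $\|f_\lambda'(c_\lambda)\|_{L^1(\Omega)}$ by $\int_\Omega f_\lambda'(c_\lambda)(c_\lambda-m_\Omega)+C$. Together with the Poincar\'e--Wirtinger inequality for $\mu_\lambda-\bar\mu_\lambda$ and absorption, this produces uniform bounds for $f_\lambda'(c_\lambda)$ in $L^2(\Omega\times(0,T))$, for $\mu_\lambda$ in $L^2(0,T;H^1(\Omega))$, and, through \eqref{eq:nCHweak}, for $\partial_t c_\lambda$ in $L^2(0,T;(H^1(\Omega))')$.

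With these bounds I would pass to the limit $\lambda\to0$ along a subsequence. The compactness statement of Lemma~\ref{lem:3.3} (relative compactness in $L^2(\Omega)$ of sequences of bounded energy), combined with the $W^1_2(0,T;(H^1(\Omega))')$ bound and Aubin--Lions, gives $c_\lambda\to c$ strongly in $L^2(\Omega\times(0,T))$ and a.e., while $\mu_\lambda\rightharpoonup\mu$ in $L^2(0,T;H^1(\Omega))$ and $f_\lambda'(c_\lambda)\rightharpoonup\chi$ in $L^2(\Omega\times(0,T))$. The a.e.\ convergence, the structure of the Moreau--Yosida approximation, and the uniform $L^2$ bound on $f_\lambda'(c_\lambda)$ force $c(x,t)\in(-1,1)$ a.e.\ and allow the identification $\chi=f'(c)$ by a monotonicity (Minty) argument. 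Strong convergence of $m(c_\lambda)$ by dominated convergence against the weak convergence of $\nabla\mu_\lambda$ handles the mobility term, and the continuity of the linear functional $\mathcal{B}(\cdot,\varphi)$ on $V$ together with $c_\lambda\rightharpoonup c$ in $V$ handles the nonlocal term, giving \eqref{eq:nCHweak}--\eqref{eq:nCHweak2}. The energy inequality follows by taking $\liminf$ in the approximate identity, using weak lower semicontinuity of the convex quadratic form $\E$ and of the dissipation, Fatou's lemma for $\int_\Omega f(c)$, and convergence of the transport term; finally $c\in BC_w([0,T];L^2(\Omega))$ and attainment of $c_0$ follow from the embedding $L^\infty(0,T;L^2(\Omega))\cap W^1_2(0,T;(H^1(\Omega))')\hookrightarrow BC_w([0,T];L^2(\Omega))$. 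I expect the main obstacle to be the uniform $L^2$ estimate on $f_\lambda'(c_\lambda)$ together with the identification of $\chi$ and the constraint $c\in(-1,1)$: this is where the singular potential genuinely enters, and it crucially replaces the usual pointwise identity $\int\nabla c\cdot\nabla f'(c)=\int f''(c)|\nabla c|^2$ by the monotonicity property of the nonlocal form $\mathcal{B}$.
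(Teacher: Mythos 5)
Your overall architecture matches the paper's: split $f=f_0-\kappa\,(\cdot)^2/2$ with $f_0$ convex, regularize the convex part (your Moreau--Yosida approximation plays the role of the paper's explicit linear extension of $f_0'$ beyond $\pm(1-\lambda)$), derive the energy estimate, the $L^1\to L^2$ estimates on $f_\lambda'(c_\lambda)$ via the mean-value trick and the monotonicity $\mathcal{B}(u,g(u))\ge 0$, and identify the limit by Minty's argument. The inner approximation differs: you propose Faedo--Galerkin in subspaces of $V$, whereas the paper uses an implicit time discretization (a minimizing-movements scheme for $\mathcal{F}_h$ whose Euler--Lagrange equations are the time-discrete system), which delivers the discrete energy inequality directly from the variational structure. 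That substitution is in principle acceptable.

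There is, however, a genuine gap in your compactness step, and it is tied to a misreading of where the hypothesis $\delta_0<\tfrac1\kappa$ enters. You invoke the compactness assertion of Lemma~\ref{lem:3.3} to get $c_\lambda\to c$ strongly in $L^2(\Omega\times(0,T))$. That assertion concerns sequences $(\varphi_n)$ with bounded energies $\E_{\eps_n}(\varphi_n)$ for $\eps_n\to 0$ and rests on the concentration of the kernels at the origin; Theorem~\ref{thm:Existence} is stated for a \emph{fixed} kernel $k$ satisfying only (A1)--(A2), for which $V$ need not embed compactly into $L^2(\Omega)$ at all (for an integrable kernel $V=L^2(\Omega)$ with an equivalent norm). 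So neither Lemma~\ref{lem:3.3} nor any compactness of $V\hookrightarrow L^2(\Omega)$ is available, and without strong (or a.e.) convergence of $c_\lambda$ you cannot identify $m(c_\lambda)$, $\chi=f'(c)$, or pass to the limit in the quadratic form. The paper's mechanism is different: Aubin--Lions (in its time-discrete form due to Dreher and J\"ungel) gives strong convergence only in $L^2(0,T;H^{-1}_{(0)}(\Omega))$, and this is upgraded to $L^2(Q_T)$ by subtracting the two chemical-potential relations, testing with the difference of the approximate solutions, discarding the nonnegative monotone term coming from $f'_{0,\lambda}$, and applying \eqref{eq:Ehrling}; this produces the factor $\bigl(\tfrac1{\delta_0}-\kappa\bigr)\|c_h-c_{h'}\|_{L^2(Q_T)}^2$ on the right, which is coercive precisely because $\delta_0<\tfrac1\kappa$. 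Your proposal instead assigns $\delta_0<\tfrac1\kappa$ to coercivity of $E_\lambda$ (for which $\delta_0<\tfrac2\kappa$ would suffice, and which is not the delicate point), and the claim that conservation of the mean ``controls'' $\|c_\lambda\|_{H^{-1}_{(0)}}$ is also not correct as stated. To repair the proof you must replace the appeal to Lemma~\ref{lem:3.3} by this equation-based Cauchy/strong-convergence argument, both in the Galerkin limit and in the limit $\lambda\to 0$.
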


For the following we use that $f(s)= f_0(s)- \kappa \frac{s^2}2$ for all $s\in [-1,1]$, where $\kappa = -\inf_{s\in (-1,1)} f''(s)$. Then $f_0\in C^0([-1,1])\cap C^2((-1,1))$ is convex. We extend $f_0$ outside of $[-1,1]$ by $+\infty$.

We prove the existence result in two steps: First we regularize $f_0$ as follows: For $\lambda\in (0,1)$ let
\begin{alignat*}{1}
g_\lambda(s)&= \begin{cases}
    f_0'(1-\lambda)+\frac1\lambda (s-1+\lambda), &\text{if } s\geq 1-\lambda\\
    f'(s) &\text{if } s\in (-1+\lambda, 1-\lambda),\\
    f_0'(-1+\lambda)+\frac1\lambda (s+1-\lambda), &\text{if } s\leq -1+\lambda.
\end{cases}\\
f_{0,\lambda}(s) &= f_0(0)+\int_0^s g_\lambda(r)\, dt
\end{alignat*}
for all $s\in\R$. Then $f'_{0,\lambda}= g_\lambda\colon \R\to \R$ is globally Lipschitz continuous,
\begin{equation}\label{eq:Coercivflambda0}
    f_{0,\lambda} (s) \geq \frac1{2\lambda} s^2 - C_\lambda \qquad\text{for all }s\in\R
    \end{equation}
    and some $C_\lambda>0$ and
\begin{equation*}
    \lim_{\lambda \to 0} f'_{0,\lambda} (s) = 
    \begin{cases} 
    +\infty &\text{if } s\geq 1,\\ 
    f_0'(s) &\text{if } s\in (-1,1),\\
    -\infty &\text{if } s\leq -1.
    \end{cases}
\end{equation*}
%\begin{equation*}
%  f_{0,\lambda}(s)= \inf_{r\in\R}\left(f_0(r) + \frac1{2\lambda}|r-s|^2\right)\quad \text{for }s\in\R, \lambda>0
%\end{equation*}
Finally, we set $f_\lambda(s)= f_{0,\lambda}(s)-\kappa \frac{s^2}2$ for all $s\in\R$.

It is well-known that for every $\tilde m\in (-1,1)$ there are $c_m,C_m$ such that
\begin{equation*}
|f'(s)|\leq c_{\tilde m} f'(s)(s-\tilde m) +C_{\tilde m}\qquad \text{for all }s\in(-1,1). 
\end{equation*}
Indeed this can be shown by considering the inequality on $(-1,-1+\eps]$, $[-1+\eps, 1-\eps]$, and $[1-\eps, 1)$ for some sufficiently small $\eps>0$ using $\lim_{s\to \pm 1} f'(s)= \pm \infty$ and $\tilde m \in (-1+\eps,1-\eps)$.
From this it is easy to conlcude that there are $c_{m_\Omega},C_{m_\Omega}>0$ such that for all $\lambda>0$ sufficiently small
\begin{equation*}
|f'_\lambda(s)|\leq c_{m_\Omega} f'_\lambda (s)(s-m_\Omega) +C_{m_\Omega}\qquad \text{for all }s\in\R. 
\end{equation*}

Next we use an implicit time discretization: For given 
$$c_k\in L^2_{(m_\Omega)}(\Omega):=\{u\in L^2(\Omega): \tfrac1{|\Omega|} \int_\Omega u(x)\, dx=m_\Omega\}\quad \text{with }E(c_k)<\infty,
$$ 
$k\in \{0,\ldots, N-1\}$ and $h=\frac{T}N$, $N\in\N$, let $c_{k+1}\in V\cap L^2_{(m_\Omega)}(\Omega)$ be a minimizer of
\begin{equation*}
  \mathcal{F}_h(c):= \frac{h}2\int_\Omega m(c_k)|\nabla \mu_0|^2\, dx + E_{0,\lambda}(c)- \int_{\Omega}\kappa c_k c\,dx ,
\end{equation*}
where $\mu_0\in H^1_{(0)}(\Omega):=H^1(\Omega)\cap L^2_{(0)}(\Omega)$ is the weak solution of
\begin{alignat*}{2}
  \Div (m(c_k)\nabla \mu_0) &= \frac1h (c-c_k) + \Div (\we_k c_k)&\quad &\text{in }\Omega,\\
  \no\cdot m(c_k)\nabla \mu_0|_{\partial\Omega} &= \no \cdot \we_k c_k &&\text{on }\partial\Omega,
\end{alignat*}
i.e.,
\begin{equation}\label{eq:WeakNeumann1}
  \int_\Omega m(c_k)\nabla \mu_0\cdot \nabla \psi \,dx = -\frac1h\int_{\Omega} (c-c_k)\psi\,dx +\int_{\Omega} \we_k c_k\cdot \nabla \psi\,dx \quad 
\end{equation}
for all $\psi \in H^1_{(0)}(\Omega)$ 
and
\begin{equation*}
  E_{0,\lambda}(c)= \frac14\int_\Omega\int_\Omega k(x,y)(c(x)-c(y))^2\,dx\, dy+\int_\Omega f_{0,\lambda}(c(x))\, dx. 
\end{equation*}
Here $\we_k= \frac1h \int_{kh}^{(k+1)h} \we(x,\tau)\,dx$ for $k=0,\ldots, N-1$.

Because of \eqref{eq:Coercivflambda0}, it is easy to observe that $\mathcal{F}_h$ is coercive on $V\cap L^2_{(m_\Omega)}(\Omega)$. Moreover, $\mathcal{F}_h$ is sequentially weakly lower semi-continuous since $\mathcal{F}_h\colon V\cap L^2_{(m_\Omega)}(\Omega)\to \R$ is continuous and convex. This yields existence of a minimizer $c_{k+1}\in V\cap L^2_{(m_\Omega)}(\Omega)$ by the direct method of the calculus of variations.

Let $c_{k+1}\in V\cap L^2_{(m_\Omega)}(\Omega)$ be a minimizer of $\mathcal{F}_h$. Then for any $\varphi \in L^2_{(0)}(\Omega)$ with $\mathcal{E}(\varphi)<\infty$ we have 
\begin{align*}
    0 =& \frac{d}{d\eps} \mathcal{F}_h(c_{k+1}+\eps\varphi)|_{\eps=0}=h\int_\Omega m(c_k)\nabla \mu_{0,k+1}\cdot\nabla \mu_0'\,dx \\
    &+\frac12 \int_\Omega\int_\Omega k(x,y)(c_{k+1}(x)-c_{k+1}(x))(\varphi(x)-\varphi(y))\, dx\, dy+\int_\Omega (f'_{0,\lambda}(c_{k+1})-\kappa c_k)\varphi\,dx,
\end{align*}
where $\mu_{0,k+1}= \mu_0$ solves \eqref{eq:WeakNeumann1} with $c=c_{k+1}$ and $\mu_0'$ solves the derivative of \eqref{eq:WeakNeumann1} with respect to $c$ in direction of $\varphi$, i.e.,
\begin{equation*}
    \int_\Omega m(c_0)\nabla \mu_0'\cdot \nabla \psi \,dx = -\frac1h \int_\Omega \varphi \psi\, dx\qquad \text{for all }\psi \in H^1_{(0)}(\Omega).
\end{equation*}
Choosing $\psi=\mu_{0,k+1}$ we obtain
\begin{align}\nonumber
  \int_\Omega  \mu_{0,k+1} \varphi\,dx =& \frac12 \int_\Omega\int_\Omega k(x,y) (c_{k+1}(x)-c_{k+1}(y))(\varphi(x)-\varphi(y))\,dx \, dy\\\label{eq:chemical}
  &+ \int_\Omega (f_{0,\lambda}'(c_{k+1})-\kappa c_k) \varphi \,dx
\end{align}
for all $\varphi\in L^2_{(0)}(\Omega)$ with $\mathcal{E}(\varphi)<\infty$. This means that a minimizer $c_{k+1}$ satisfies the Euler-Lagrange equation
\begin{alignat*}{2}
  \frac{c_{k+1}-c_k}h+ \Div (\we_k c_k) &= \Div (m(c_k) \nabla \mu_{0,k+1})&\quad & \text{in }\Omega,\\
  \mu_{0,k+1} &=   \mathcal{L} c_{k+1} + P_0(f_{0,\lambda}'(c_{k+1})-\kappa c_k)&\quad & \text{in }\Omega,\\
  \no\cdot m(c_k)\nabla \mu_{0,k+1}|_{\partial\Omega} &=\no\cdot \we_k c_k&&\text{on }\partial\Omega
\end{alignat*}
in a weak sense.  Now we define
\begin{equation*}
  \mu_{k+1} = \mu_{0,k+1} + \frac1{|\Omega|}\int_\Omega (f_{0,\lambda}'(c_{k+1})-\kappa c_k)\,dx.
\end{equation*}
This yields
\begin{equation*}
  \int_\Omega  \mu_{k+1} \varphi\,dx = \mathcal{B}(c_{k+1},\varphi) + \int_\Omega (f_{0,\lambda}'(c_{k+1})-\kappa c_k) \varphi \,dx
\end{equation*}
for all $\varphi\in L^2(\Omega)$ with $\mathcal{E}(\varphi)<\infty$. 
Choosing $\psi = h\mu_{0,k+1}$ in \eqref{eq:WeakNeumann1} for $\mu_0=\mu_{0,k+1}$  and $\varphi = \frac{c_{k+1}-c_k}h$ in \eqref{eq:chemical} and using that $\mu_{k+1}$ and $\mu_{0,k+1}$ differ only by a constant, we obtain
\begin{align*}
 \mathcal{B}(c_{k+1},c_{k+1}-c_k) &+ \int_\Omega f'_{0,\lambda}(c_{k+1})(c_{k+1}-c_k)\,dx - \int_\Omega \kappa c_k(c_{k+1}-c_k) + h \int_\Omega m(c_k)|\nabla \mu_{k+1}|^2\,dx \\
&= h\int_\Omega c_k\we_k\cdot \nabla \mu_{k+1}\,dx.
\end{align*}
Since $f_{0,\lambda}$ is convex, we have
\begin{equation*}
  \int_{\Omega} f_{0,\lambda}(c_{k+1}) \,dx - \int_{\Omega} f_{0,\lambda}(c_k) \,dx\leq  \int_\Omega f'_{0,\lambda}(c_{k+1})(c_{k+1}-c_k)\,dx.
\end{equation*}
Moreover, 
\begin{align*}
  \mathcal{B}(c_{k+1},c_{k+1}-c_k) &= \mathcal{E}(c_{k+1})- \mathcal{E}(c_k) + \mathcal{B}(c_{k+1}-c_k,c_{k+1}-c_k),\\
  -\int_\Omega \kappa c_k(c_{k+1}-c_k) &= -\frac{\kappa}2 \int_\Omega c_{k+1}^2\,dx+\frac{\kappa}2 \int_\Omega c_k^2\,dx +\frac{\kappa}2 \int_\Omega (c_{k+1}-c_k)^2\,dx.
\end{align*}
Altogether this yields
\begin{align}\label{eq:DiscreteEnergyInequality}
  E_\lambda (c_{k+1}) + h \int_\Omega m(c_k)|\nabla \mu_{k+1}|^2\,dx \leq E_\lambda (c_k)+ h\int_\Omega c_k\we_k\cdot \nabla \mu_{k+1}\,dx, 
\end{align}
where
\begin{equation*}
  E_\lambda(c)= \frac14\int_\Omega\int_\Omega k(x,y)(c(x)-c(y))^2\,dx\, dy+\int_\Omega f_{\lambda}(c(x))\, dx. 
\end{equation*}
Now let $c_h\colon [-h,T]\times \Omega \to\R$, $\mu_h\colon [-h,T]\times \Omega \to\R$ be piecewise-constant interpolants of $(c_k,\mu_k)$ at $t=kh$, i.e., $c_h(t)= c_k$, $\mu_h(t)=\mu_k$ for $t\in [(k-1)h,kh) $ and $k=0,\ldots, N$. Then
\begin{alignat*}{2}
  \partial_t^hc_h+ \Div (\we_h^-c_h^-)&= \Div (m(c_h^-) \nabla \mu_h)&\quad & \text{in }\Omega\times (0,T),\\
  \mu_h &=  \mathcal{L} c_h + f_{0,\lambda}'(c_h)-\kappa c_h^-&\quad & \text{in }\Omega\times (0,T),\\
  \no\cdot m(c_h^-)\nabla \mu_{k+1}|_{\partial\Omega} &=\no \cdot \we_h^-c_h^-&&\text{on }\partial\Omega\times (0,T),
\end{alignat*}
where $c_h^-(x,t)=c_h(x,t-h)$, $\partial_t^h c_h(x,t)= \frac1h (c_h(x,t+h)-c_h(x,t))$, and the second line is understood analogously as before. Using \eqref{eq:DiscreteEnergyInequality} we obtain the energy estimate
\begin{align}\label{eq:DiscreteEnergyInequality2}
  E_\lambda(c_h(t))+\int_0^t \int_\Omega m(c_h)|\nabla \mu_h|^2\, dx\, dt\leq E_\lambda(c_0) + \int_0^t \int_\Omega c_h\we\cdot \nabla \mu_h \,dx \,d\tau 
\end{align}
for all $t\in [0,T]\cap h\N_0$.
From this (together with Young's inequality) we obtain for a suitable subsequence $h\to 0$ (not relabeled)
\begin{alignat*}{2}
  c_h&\rightharpoonup_{h\to 0}^\ast c_\lambda &\qquad & \text{in }L^\infty(0,T; V),\\
  \mu_h&\rightharpoonup_{h\to 0} \mu_\lambda &\qquad & \text{in }L^2(0,T; H^1(\Omega))
\end{alignat*}
since $L^1(0, T; V)$ is separable. Moreover, by a time discrete version of the Lemma of Aubin-Lions due to Dreher and J\"ungel~\cite[Theorem 1]{DreherJuengelAubinLions} we obtain:
\begin{equation*}
  c_h-m_\Omega\to_{h\to 0} c_\lambda-m_\Omega \qquad \text{in }L^2(0,T;H^{-1}_{(0)}(\Omega)).
\end{equation*}
Next we show that $(c_h)_{h>0}$ is a Cauchy sequence in $L^2(\Omega\times (0,T))$. To this end let $h,h'>0$. Then
\begin{align*}
    \int_\Omega (\mu_h-\mu_{h'})\varphi\,dx = \mathcal{B}(c_h-c_{h'}, \varphi)+ \int_\Omega (f'_{0,\lambda}(c_h)-f'_{0,\lambda}(c_{h'}))\varphi\,dx -\kappa \int_\Omega (c_h-c_{h'})\varphi\,dx 
\end{align*}
for all $\varphi\in L^2_{(0)}(\Omega)$ with $\mathcal{E}(\varphi)<\infty$. Choosing $\varphi= c_h-c_{h'}$ we obtain
\begin{align*}
  &\int_{Q_T}(\mu_h-\mu_{h'})(c_h-c_{h'})\,d(x,t) + \frac{C(\delta_0)}{\delta_0}\|c_h-c_{h'}\|_{L^2(0,T;H^{-1}_{(0)})}^2\\
  &\quad \geq \mathcal{E}(c_h-c_{h'}) + \frac{C(\delta_0)}{\delta_0}\|c_h-c_{h'}\|_{L^2(0,T;H^{-1}_{(0)})}^2 -\kappa \|c_h-c_{h'}\|_{L^2(Q_T)}^2
    \geq (\tfrac1{\delta_0} -\kappa) \|c_h-c_{h'}\|_{L^2(Q_T)}^2
\end{align*}
because of \eqref{eq:Ehrling}.
Hence, if $\delta_0 <\frac1\kappa$, $(c_h)_{h>0}$ is a Cauchy sequence in $L^2(\Omega\times (0,T))$ since the left-hand side convergences to zero as $h,h'\to 0$. Now, using the Lipschitz continuity of $f_{0,\lambda}$, it is easy to pass to the limit $h\to 0$ in the equations for $(c_h,\mu_h)$ and show that $(c_\lambda,\mu_\lambda)$ solve \eqref{eq:nCHweak}-\eqref{eq:nCHweak2} with $f_\lambda(s) =f_{0,\lambda}(s)-\tfrac{\kappa}2s^2$ instead of $f(s)$. More precisely,
\begin{alignat*}{1}%\label{eq:nCHweaklambda}
  \langle \partial_t c_\lambda(t), \varphi\rangle - \int_\Omega\we(t) c_\lambda(t)\cdot \nabla \varphi\,dx = -\int_\Omega m(c_\lambda(t)) \nabla \mu_\lambda(t)\cdot \nabla \varphi\,dx 
\end{alignat*}
for all $\varphi \in H^1(\Omega)$ and for almost every $t\in (0,T)$ as well as
\begin{equation}\label{eq:mlambda}
  \int_\Omega \mu_\lambda(t)\varphi \,dx = \mathcal{E} (c_\lambda(t), \varphi) + \int_\Omega f'_\lambda (c(t))\varphi \,dx 
\end{equation}
for all $\varphi \in V$ and for almost every $t\in (0,T)$. Furthermore, passing to the limit in \eqref{eq:DiscreteEnergyInequality2}, using the weak lower semi-continuity of $\mathcal{E}$ in $V$ and the strong convergence in $L^2(\Omega)$ as well as $\sqrt{m(c_\lambda)}\nabla \mu_\lambda\rightharpoonup_{\lambda\to 0} \sqrt{m(c)}\nabla \mu$, we obtain
\begin{align}\label{eq:ApproxEnergyInequality}
  E_\lambda(c_\lambda(t))+\int_0^t \int_\Omega m(c_\lambda)|\nabla \mu_\lambda|^2\, dx\, dt\leq E_\lambda(c_0) + \int_0^t \int_\Omega c_\lambda \we\cdot \nabla \mu_\lambda \,dx \,d\tau 
\end{align}
for all $t\in [0,T]$.

In order to pass to the limit $\lambda\to 0$, we obtain first of all from \eqref{eq:ApproxEnergyInequality} together with Young's inequality that
\begin{align}\label{eq:Bddcmulambda}
    (c_\lambda)_{\lambda\in (0,1)} \subset L^\infty(0,T;L^2(\Omega)),\quad (\nabla \mu_\lambda)_{\lambda\in (0,1)} \subset L^2(0,T;L^2(\Omega))
\end{align}
are bounded. %, where one also uses \eqref{eq:Coercivflambda0}. 
Moreover, we need two additional estimates for $\|f_{0,\lambda}(\tc_\lambda)\|_{L^2(\Omega\times (0,T))}$ and \\
$\|\mu_\lambda\|_{L^2(\Omega\times (0,T))}$. %As e.g.\ in \cite[Proof of Lemma~3.7]{AbelsDepnerGarcke}, 
%one can show that there exist $C_1$, $C_2>0$, and $\lambda_0\in (0,1]$ depending on $m_\Omega$ such that
%\begin{equation*}
%  f'_{0,\lambda}(s)(s - m_\Omega)\geq C_1 |f'_{0,\lambda}(s)|- C_2\qquad \text{for all }s\in\R, \lambda\in (0,\lambda_0]
%\end{equation*}
%by considering it on the intervals $(-\infty, -1+m_\Omega/2]$, $[-1+m_\Omega/2, 1-m_\Omega/2]$, $[1-m_\Omega/2,\infty)$ separately and using $\lim_{s\to \pm 1} f_0'(s)=\pm %\infty$.
Using \eqref{eq:Coercivflambda0} yields
\begin{align}
& \int_{\Omega} f'_{0,\lambda}(c_\lambda(x, t))(c_\lambda(x, t) - m_\Omega) dx \geq C_1 \| f'_{0,\lambda}(c_\lambda) \|_{L^1(\Omega)} - C_2.
\end{align}
Now using $\varphi = c_\lambda(t)-m_\Omega$ as a test function in \eqref{eq:mlambda}, we obtain:
\begin{alignat*}{1}
    &\int_\Omega \mu_{0,\lambda}(t) (c_{\lambda}(t)-m_\Omega)\,dx +\kappa \int_\Omega  (c_{\lambda}(t)-m_\Omega)^2 \,dx \\
    &\quad = \mathcal{B} (c_\lambda(t), c_\lambda (t)) + \int_\Omega f'_{0,\lambda} (c(t))(c_\lambda(t)-m_\Omega) \,dx\\
    &\quad \geq C_1 \| f'_{0,\lambda}(c_\lambda) \|_{L^1(\Omega)} - C_2,
\end{alignat*}
where $\mu_{0,\lambda}= \mu_\lambda - \tfrac1{|\Omega|} \int_\Omega \mu_\lambda \, dx$. Using the Poincar\'e's inequality and the bounds in \eqref{eq:Bddcmulambda} one obtains that the left-hand side is bounded in $L^2(0,T)$. Hence we obtain that $(f'_{0,\lambda}(c_\lambda))_{\lambda\in (0,1)}$ is bounded in $L^2(0,T;L^1(\Omega))$. Now choosing $1$ as test function in \eqref{eq:mlambda} yields
\begin{align*}
    \int_\Omega \mu_\lambda(t) \,dx = \int_\Omega (f'_{0,\lambda} (c_\lambda)- \kappa c_\lambda)\,dx\quad \text{for almost every }t\in (0,T).
\end{align*}
Hence $\int_\Omega \mu_\lambda(t) \,dx$, $\lambda\in (0,1)$, is bounded in $L^2(0,T)$. Hence another application of Poincar\'e's inequality yields the boundedness of
\begin{equation*}
    (\mu_\lambda)_{\lambda\in (0,1)}\subset L^2(0,T;H^1(\Omega)).
\end{equation*}
Next we test \eqref{eq:mlambda} with $f'_{0,\lambda}(c_\lambda(t))$ and obtain
 \begin{alignat*}{1}
     \mathcal{B}(c_\lambda(t), f'_{0,\lambda}(c_\lambda(t)))+ \int_\Omega |f_{0,\lambda}'(c_\lambda(t))|^2\,dx \leq \|\mu_\lambda(t)+\kappa c_\lambda(t)\|_{L^2(\Omega)}\|f'_{0,\lambda}(c_\lambda(t))\|_{L^2(\Omega)},
 \end{alignat*}
 where 
 \begin{alignat*}{1}
    & \mathcal{B}(c_\lambda(T), f'_{0,\lambda}(c_\lambda(t)))\\
    &= \frac14 \int_\Omega\int_\Omega k(x-y)(c_\lambda(x,t)-c_\lambda(y,t))(f'_{0,\lambda}(c_\lambda(x,t))-f'_{0,\lambda}(c_\lambda(y,t)))\,dx \, dy\geq 0
 \end{alignat*}
 since $f'_{0,\lambda}\colon \R\to \R$ is non-decreasing. Thus we obtain the uniform boundedness of 
 \begin{equation*}
    (f'_{0,\lambda}(c_\lambda))_{\lambda\in (0,1)}\subset L^2(0,T;L^2(\Omega)).
\end{equation*}
As before this yields for a suitable subsequence
\begin{alignat*}{2}
  c_\lambda&\rightharpoonup_{\lambda\to 0}^\ast c &\qquad & \text{in }L^\infty(0,T; V),\\
  \mu_\lambda &\rightharpoonup_{\lambda\to 0} \mu &\qquad & \text{in }L^2(0,T; H^1(\Omega)),\\
  c_\lambda&\rightharpoonup_{\lambda\to 0}^\ast c &\qquad & \text{in }L^2(0,T; L^2(\Omega)).
\end{alignat*}
Moreover, 
\begin{equation*}
    f'_{0,\lambda}(c_\lambda)\to_{\lambda\to 0} \xi\qquad  \text{in } L^2(0,T;L^2(\Omega))
\end{equation*}
for some $\xi\in L^2(0,T;L^2(\Omega))$. Using that $c_\lambda(x,t)\to_{\lambda\to} c(x,t)$ almost everywhere for a suitable subsequence, one can conclude from this in a nowadays standard manner that $\xi(x,t) = f'_0(c(x,t))$ for almost every $x\in \Omega, t\in (0,T)$. E.g.\ one obtains 
\begin{align*}
    &\int_0^T\int_\Omega (\xi(x,t)- f'_0(s))(c(x,t)-s)\varphi(x,t)\,dx \,dt \\
    &= \lim_{\lambda\to 0}\int_0^T\int_\Omega (f'_{0,\lambda}(c_\lambda(x,t))- f'_0(s))(c_\lambda(x,t)-s)\varphi(x,t)\,dx \,dt\geq 0
\end{align*}
for every $\varphi \in C_0^\infty(\Omega\times (0,T))$ with $\varphi\geq 0$ and every $s\in (-1,1)$. From this one concludes
\begin{align*}
    (\xi(x,t)- f'_0(s))(c(x,t)-s)\geq 0\quad \text{for almost every } x\in\Omega,t\in(0,T) \text{ and every } s\in (-1,1).
\end{align*}
This yields $\xi= f'_0(c)$ and $c(x,t)\in (-1,1)$ almost everywhere by choosing $s=-c(x,t)+\eps$ if $c(x,t)<1$ and $s=c(x,t)-\eps$ if $c(x,t)>-1$ and sending $\eps>0$ to zero.

Using this it is easy to pass to the limit in the equations and show that $(c,\mu)$ is a weak solution of \eqref{eq:nlCH1'}-\eqref{eq:nlCH4'} satisfying the stated energy inequality.

\section{Nonlocal to Local Limit}\label{sec:NonlocalLimit}

From results in Section \ref{sec:Existence}, for sufficiently small $\eps$, there exists a weak solution of the nonlocal Cahn-Hilliard equation \eqref{eq:nlCH1}-\eqref{eq:nlCH4} for any initial data $c_{0,\eps} \in L^2(\Omega)$ with $|c_{0,\eps}|\leq 1$ and $\frac1{|\Omega|}\int_\Omega c_{0,\eps}(x)\,dx\in (-1,1)$. We show here that some subsequence of weak solutions of the nonlocal Cahn-Hilliard equation \eqref{eq:nlCH1}-\eqref{eq:nlCH4} converges in an appropriate way to a weak solution of the local Cahn-Hilliard equation \eqref{eq:anCH1}-\eqref{eq:anCH5}: 
\begin{theorem}\label{maintheorem}
Let $\eps_{*}>0$. For any $\eps \in (0, \eps_{*})$ let $c_{0, \eps} \in L^{\infty}(\Omega)$ with
$ |c_{0, \eps}| \leq 1 $ almost everywhere, and
$\tfrac{1}{| \Omega |} \int_{\Omega} c_{0, \eps}(x) \,dx = m_{\Omega} $ for all $\eps \in (0, \eps_{*})$ and some $m_{\Omega} \in (-1, 1)$. Moreover, we assume that there is some $c_0 \in H^1(\Omega) $ such that $ c_{0, \eps} \rightarrow_{\eps \rightarrow 0} c $ in $L^2(\Omega)$ and $ \mathcal{E}_{\eps}(c_{0, \eps}) \rightarrow_{\eps\to 0} \mathcal{E}_0(c_0) $ and there are $\ve_\eps \in L^2(\Omega\times(0,T))^d$ for every $\eps\in (0,\eps_{*})$ and some $\ve\in L^2(\Omega\times (0,T))^d$ such that $\ve_{\eps}$ converges weakly to $\ve$
in $ L^2(\Omega\times (0,T))^d. $
If $(c_{\eps},\mu_{\eps})$ are weak solutions of the nonlocal Cahn-Hilliard equation \eqref{eq:nlCH1}-\eqref{eq:nlCH4}
with initial values $c_{0, \eps}$, then
\begin{align}
&\mu_{\eps} \rightharpoonup \mu \quad \text{in }L^2(0, T; V), \label{muweakconv}
\\
&c_{\eps} \rightarrow c \quad\text{in }C([0, T]; L^2(\Omega))~ \text{and  a.e.} \label{cstrongconv}
\end{align}
for a suitable sequnece $\eps = \eps_k \rightarrow_{k \rightarrow \infty} 0$, where $(c, \mu)$ is a weak solution of the anisotropic Cahn-Hilliard equation
\eqref{eq:anCH1}-\eqref{eq:anCH5} with initial data $c_0$, which satisfies the energy inequality 
\begin{align*}
   E_0(c(t))+\int_0^t \int_\Omega m(c)|\nabla \mu|^2\, dx\, dt\leq E_0(c_0) + \int_0^t \int_\Omega c\we\cdot \nabla \mu \,dx \,d\tau 
 \end{align*}
for all $t\in [0,T]$, where
\begin{equation*}
    E_0(c)= \frac12 \int_\Omega \nabla c\cdot A \nabla c\,dx + \int_\Omega f(c)\,dx.
\end{equation*}
\end{theorem}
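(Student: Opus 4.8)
The plan is to run the by-now standard compactness scheme for nonlocal-to-local limits, producing the approximating pairs $(c_\eps,\mu_\eps)$ by Theorem~\ref{thm:Existence} and using Lemmas~\ref{lem:3.3} and~\ref{lem:3.4} as the main analytic tools. First I would extract uniform bounds from the energy inequality for $(c_\eps,\mu_\eps)$. Absorbing the work term by Young's inequality, $\int_\Omega c_\eps\ve_\eps\cdot\nabla\mu_\eps\,dx\le\tfrac12\int_\Omega m(c_\eps)|\nabla\mu_\eps|^2\,dx+C\|\ve_\eps\|_{L^2(\Omega)}^2$ (using $|c_\eps|\le1$ and $m\ge m_0>0$ on $[-1,1]$), and invoking $\E_\eps(c_{0,\eps})\to\E_0(c_0)$ together with the uniform $L^2$-bound on $\ve_\eps$, yields $\sup_t\E_\eps(c_\eps(t))\le K$ and a uniform bound for $\nabla\mu_\eps$ in $L^2(\Omega\times(0,T))$. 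Testing the chemical-potential relation \eqref{eq:nCHweak2} with $c_\eps-m_\Omega$ and then with $f_0'(c_\eps)$, exactly as in the $\lambda\to0$ step of the proof of Theorem~\ref{thm:Existence} (using the coercivity $|f'(s)|\le c_{m_\Omega}f'(s)(s-m_\Omega)+C_{m_\Omega}$ and the monotonicity of $f_0'$), gives uniform bounds for $f'(c_\eps)$ in $L^2(\Omega\times(0,T))$ and hence, via Poincar\'e, for $\mu_\eps$ in $L^2(0,T;H^1(\Omega))$. Finally, the weak balance equation bounds $\partial_t c_\eps$ in $L^2(0,T;(H^1(\Omega))')$.

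These bounds give $\mu_\eps\rightharpoonup\mu$ in $L^2(0,T;H^1(\Omega))$ and the strong convergence \eqref{cstrongconv} of the order parameter as follows. Since $(c_\eps)$ is bounded in $L^\infty(0,T;L^2(\Omega))$ with $(\partial_t c_\eps)$ bounded in $L^2(0,T;(H^1(\Omega))')$ and the embedding $L^2(\Omega)\hookrightarrow(H^1(\Omega))'$ is compact, the Aubin--Lions--Simon lemma gives $c_\eps\to c$ in $C([0,T];(H^1(\Omega))')$ along a subsequence. I would then upgrade this to $C([0,T];L^2(\Omega))$ by Lemma~\ref{lem:3.4}: for $\eps_1,\eps_2$ small, $\sup_t\|c_{\eps_1}(t)-c_{\eps_2}(t)\|_{L^2(\Omega)}^2\le 2\delta K+C_\delta\sup_t\|c_{\eps_1}(t)-c_{\eps_2}(t)\|_{(H^1(\Omega))'}^2$; letting $\eps_1,\eps_2\to0$ and then $\delta\to0$ shows $(c_\eps)$ is Cauchy in $C([0,T];L^2(\Omega))$, proving \eqref{cstrongconv} and, along a further subsequence, a.e.\ convergence. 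The lower-semicontinuity part of Lemma~\ref{lem:3.3} then yields $c\in L^\infty(0,T;H^1(\Omega))$.

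Next I would pass to the limit in the weak formulation. In the balance equation $\partial_t c_\eps\rightharpoonup\partial_t c$ in $L^2(0,T;(H^1(\Omega))')$, while $\ve_\eps c_\eps\rightharpoonup\ve c$ (weak $\times$ strong) and $m(c_\eps)\nabla\mu_\eps\rightharpoonup m(c)\nabla\mu$ (a.e.-convergent bounded $\times$ weak) in $L^2$, each paired against the fixed test gradient. For the nonlocal form I would use the symmetry of $k_\eps$ to write $\B_\eps(c_\eps,\varphi)=\int_\Omega c_\eps\,\LL_\eps\varphi\,dx$ and combine the strong convergence $c_\eps\to c$ in $L^2(\Omega)$ with the convergence of $\LL_\eps\varphi$ for $\varphi\in C^2(\ol\Omega)$ coming from (A2)--(A3); the identification of the limit with $\B_0(c,\varphi)$ is precisely the content of Lemma~\ref{lem:3.3}, and the general test function follows by density. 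The potential term is handled as in the proof of Theorem~\ref{thm:Existence}: from the uniform $L^2$-bound one has $f'(c_\eps)\rightharpoonup\xi$, and a.e.\ convergence of $c_\eps$ together with the monotonicity (Minty) argument identifies $\xi=f'(c)$ and forces $c(x,t)\in(-1,1)$ a.e. Hence $(c,\mu)$ is a weak solution of \eqref{eq:anCH1}--\eqref{eq:anCH5}.

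Finally, the energy inequality. Its left-hand side passes by weak lower semicontinuity: the $\E$-part via Lemma~\ref{lem:3.3} (using $c_\eps(t)\to c(t)$ in $L^2(\Omega)$ for every $t$ and $\sup_\eps\E_\eps(c_\eps(t))\le K$), the bulk part by dominated convergence of $\int_\Omega f(c_\eps(t))\,dx$ since $f$ is bounded on $[-1,1]$, and the dissipation by weak lower semicontinuity together with $\sqrt{m(c_\eps)}\nabla\mu_\eps\rightharpoonup\sqrt{m(c)}\nabla\mu$; the initial energy converges by hypothesis. \emph{The main obstacle is the work term} $\int_0^t\!\int_\Omega c_\eps\ve_\eps\cdot\nabla\mu_\eps\,dx\,d\tau$: here $c_\eps\nabla\mu_\eps\rightharpoonup c\nabla\mu$ and $\ve_\eps\rightharpoonup\ve$ converge only weakly in $L^2$, so the product cannot be passed to the limit term by term. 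I would therefore not obtain the limiting inequality by passing to the limit in this term, but instead derive it for the limit equation: with $c\in L^\infty(0,T;H^1(\Omega))\cap W^1_2(0,T;(H^1(\Omega))')$, $f'(c)\in L^2$, and $\mu\in L^2(0,T;H^1(\Omega))$, the chain-rule/energy-identity for the local convective Cahn--Hilliard system gives $\int_0^t\langle\partial_t c,\mu\rangle\,d\tau=E_0(c(t))-E_0(c_0)$, and testing the limit balance equation with $\mu$ then produces the stated inequality, the transport term now being a genuine integral of the limit quantities. In settings where $\ve_\eps$ (or $\nabla\mu_\eps$) converges strongly --- as for the Navier--Stokes/Cahn--Hilliard system of Section~\ref{sec:NSCH} --- this term passes to the limit directly and the two routes agree.
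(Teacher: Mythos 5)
Your overall scheme (energy bounds, estimate of the mean of $\mu_\eps$, Aubin--Lions plus Lemma~\ref{lem:3.4} to upgrade to $C([0,T];L^2(\Omega))$, Minty's trick for $f_0'$) matches the paper's proof. The genuine gap is in your treatment of the nonlocal bilinear form. You propose to write $\mathcal{B}_\eps(c_\eps,\varphi)=\int_\Omega c_\eps\,\LL_\eps\varphi\,dx$ for $\varphi\in C^2(\ol\Omega)$ and pair the strong $L^2$-convergence of $c_\eps$ with ``convergence of $\LL_\eps\varphi$'', invoking Lemma~\ref{lem:3.3} to identify the limit. This does not work as stated, for two reasons. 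First, Lemma~\ref{lem:3.3} only gives $\mathcal{B}_\eps(\varphi,\zeta)\to\mathcal{B}_0(\varphi,\zeta)$ for \emph{fixed} $\varphi,\zeta\in H^1(\Omega)$; it says nothing about $\mathcal{B}_\eps(c_\eps,\varphi)$ when the first argument varies and is controlled only through $\sup_\eps\E_\eps(c_\eps)<\infty$ and $c_\eps\to c$ in $L^2$. Second, $\LL_\eps\varphi$ is \emph{not} uniformly bounded in $L^2(\Omega)$ for a generic $\varphi\in C^2(\ol\Omega)$: near $\partial\Omega$ the first-order Taylor term $\int_\Omega\nabla\varphi(x)\cdot(x-y)k_\eps(x-y)\,dy$ does not cancel (the integration domain is not symmetric about $x$), and for singular kernels this produces a concentration at the boundary whose distributional limit carries the surface term $(\no\cdot A\nabla\varphi)\,d\mathcal{H}^{d-1}$. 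Indeed, if $\LL_\eps\varphi$ converged weakly in $L^2(\Omega)$ to some $g$, testing against fixed $\zeta\in H^1$ and using Lemma~\ref{lem:3.3} would force $g=-\Div(A\nabla\varphi)$ \emph{and} $\no\cdot A\nabla\varphi|_{\partial\Omega}=0$, which fails for generic $\varphi$. So pairing with $c_\eps$, which converges only in $L^2$, cannot be justified. The paper avoids this entirely: it first extracts $\eta\in L^2(Q_T)$ with $\int_0^T\mathcal{B}_\eps(c_\eps,\psi)\,dt\to\int_{Q_T}\eta\psi$, then uses the convexity of the quadratic energy, i.e.
\begin{equation*}
\int_0^T\E_\eps(c_\eps)\,dt+\int_0^T\mathcal{B}_\eps(c_\eps,\psi-c_\eps)\,dt\le\int_0^T\E_\eps(\psi)\,dt ,
\end{equation*}
passes to the limit with the liminf inequality of Lemma~\ref{lem:3.3} on the varying sequence and the pointwise convergence on the fixed competitor $\psi$, and then takes variations $\psi=c+h\chi\tau$, $\tau\in H^1_{(0)}(\Omega)$, to get $\int_\Omega\eta\tau=\int_\Omega A\nabla c\cdot\nabla\tau$; elliptic regularity then yields $c\in L^2(0,T;H^2(\Omega))$, $\eta=-\Div(A\nabla c)$ and the hidden Neumann condition $\no\cdot A\nabla c|_{\partial\Omega}=0$. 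This variational/Mosco-type identification is the key idea your proposal is missing, and without it (or a careful quantitative analysis of the boundary layer of $\LL_\eps\varphi$) the limit passage in the chemical-potential equation is not closed.

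On the energy inequality, your observation that the work term is a product of two merely weakly convergent factors is a fair point (the paper is terse here; in the application of Section~\ref{sec:NSCH} the velocities converge strongly, so the issue disappears). Your proposed remedy --- deriving the inequality a posteriori by testing the limit equation with $\mu$ and using a chain rule to get $\int_0^t\langle\partial_t c,\mu\rangle\,d\tau=E_0(c(t))-E_0(c_0)$ --- is a legitimate alternative route, but you should note that the chain rule for the singular part $\int_0^t\langle\partial_t c,f'(c)\rangle\,d\tau=\int_\Omega f(c(t))-\int_\Omega f(c_0)$ with $\partial_t c$ only in $L^2(0,T;(H^1(\Omega))')$ and $f'$ singular at $\pm1$ requires its own regularization argument; it is standard but not free.
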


\begin{proof}
One can show that $ (\mu_{\eps})_{\eps \in (0, \eps_*)} $ is bounded in $L^2(0, T; H^1(\Omega))$ adapting the arguments in Section 4.1 in \cite{DavoliEtAlW11} (See also \cite{AbelsTerasawaNonlocalToLocal}). 
We partly review the arguments here for the convenience of the reader
with changes required for the nonlocal energies involved.

Since we know from the energy inequality that $(\nabla \mu_{\eps})_{\eps \in (0, \eps_*)}\subseteq L^2 (\Omega\times (0,T))^d$ is bounded, we know from the Poincar\'e-Wirtinger inequality that it is enough to show that $ (\mu_{\eps})_{\Omega} = \frac{1}{|\Omega|} \int_{\Omega} \mu_{\eps}\,dx $ is bounded in $L^2(0, T)$.  

We define 
\begin{align*}
\mathcal{N}(c_{\eps}(t)): (H^1_{(0)}(\Omega))' \rightarrow H^1_{(0)}(\Omega) = \left\{ u \in H^1(\Omega) : \int_{\Omega} u \,dx = 0 \right\}: f \mapsto u,
\end{align*}
where $ u \in H^1_{(0)}(\Omega)$ is the solution of 
\begin{align*}
\int_{\Omega} m(c_{\eps}(t))\nabla u \cdot \nabla \psi \,dx = \left<f, \psi \right> \quad \text{for all}~~\psi \in H^1_{(0)}(\Omega).
\end{align*}
Since $m$ is strictly bounded below, there is some constant $C$ independent of $ c_{\eps}(t)$ such that 
\begin{align}
  \|\mathcal{N}(c_{\eps}(t))f\|_{H^1(\Omega)}\leq C\|f\|_{(H^1_{(0)}(\Omega))'}\qquad \text{for all }f \in (H^1_{(0)}(\Omega))'. \label{N-estimate}
\end{align}
Then testing \eqref{eq:nlCH1} $\mathcal{N}(c_\ep(t))(c_{\ep}(t) - m_{\Omega})$ (in the weak sense), \eqref{eq:nlCH2} with $c_{\ep}(t)-m_\Omega$ and taking the sum yields
\begin{align}
& \duality{\partial_t c_{\ep}(t)}{\mathcal{N}(c_\ep(t))(c_{\ep}(t) - m_\Omega)}_{H^1(\Omega)} + \mathcal{E}_{\ep}(c_{\ep}(t)) \nonumber
\\ &+ \int_{\Omega} f'_0(c_{\ep}(x, t))(c_{\ep}(x, t) - m_{\Omega})\, dx + \int_{\Omega} \theta_c c_{\ep}(x, t) (c_{\ep}(x, t) - m_{\Omega})\, dx
\nonumber \\ &= \int_{\Omega} c_{\ep}(x, t) \mathbf{v}_{\ep}(x, t) \cdot \nabla \mathcal{N}(c_\ep (t))(c_{\ep}(x, t) - m_{\Omega})\, dx, \label{testing eq:1}  
\end{align}
where 
$$
f_0(s):= f(s)+\theta_c \frac{s^2}2\qquad \text{for }s\in [-1,1]
$$
is the ``convex part'' of $f$.
Since $(\mathbf{v}_{\ep})_{\eps\in (0,\eps*)}$ converges weakly to $ \mathbf{v} $ in $ L^2(0, T; L^2(\Omega))^d $, $ \mathbf{v}_{\ep} $ is bounded in $ L^2(0, T; L^2(\Omega))^d $ for sufficiently small $\eps>0$. 
Then using \eqref{testing eq:1} to estimate $ (\mu_{\eps})_{\Omega} $, we show that $\mu_{\ep} $ is bounded in $ L^2(0, T; H^1(\Omega)) $ for sufficiently small $\eps>0$ and we can choose a subsequence such that \eqref{muweakconv} holds.
For more details of the argument, we refer to \cite{AbelsTerasawaNonlocalToLocal}.

We show that $c_{\eps} \rightarrow_{\eps \to 0} c$ strongly in $C([0, T]; L^2(\Omega))$ and a.e. for a subsequence. $\partial_t c_{\ep}$
is bounded in $L^2(0, T;(H^1(\Omega))')$ using the weak form of \eqref{eq:nlCH1} and the energy bound. Furthermore $(c_{\eps})_{\eps \in (0, \eps_*)}$ is bounded in $ L^{\infty}(0, T; L^2(\Omega)) $ since $ |c_{\eps}(x, t)|<1 $ a.e. in $Q_T$. Since $L^2(\Omega)$ is compactly embedded in $(H^1(\Omega))'$, we have $c_{\eps} \rightarrow_{\eps\to 0} c$ in $C([0, T]; (H^1(\Omega))')$ for a suitable subsequence by the Aubin-Lions lemma. Using Lemma \ref{lem:3.4} and the bounds on the energies from the energy inequality of the nonlocal Cahn-Hilliard equation, we have $ c_{\eps} \rightarrow_{\eps\to 0} c $ in $C([0, T]; L^2(\Omega))$ and a.e. for a suitable sequence. 

Since $\partial_t c_{\eps}$ is bounded in $L^2(0, T; (H^1(\Omega))')$, $\partial_t c_{\eps} $ converges weakly to $\partial_t c$ in $L^2(0, T; (H^1(\Omega))')$. Because of $\ve_{\eps} \rightarrow \ve$ weakly in $L^2(Q_T)^d$,
 $ c_{\eps} \rightarrow c $ almost everywhere, and  $|c_\eps(x,t)|\leq 1$, we have that $ \ve_{\eps} c_{\eps} $ converges weakly to $ \ve c $ in $L^2(Q_T)^d$ as $\eps\to 0$. We also have that $m(c_{\eps}) \nabla \mu_{\eps}$ converges weakly to $m(c)\nabla \mu$ in $L^2(Q)^d$. We use these facts to pass to the limit in the weak form of the equation
\begin{align} \label{eq:order}
\partial_t c_{\eps} + \Div (\ve_{\eps} c_{\eps}) = \Div(m(c_{\eps}) \nabla \mu_{\eps})~~~~\mathrm{in}~~\Omega \times (0, T).
\end{align}
By the argument in \cite[Chapter 5]{DavoliEtAlW11} (see also \cite{AbelsTerasawaNonlocalToLocal}),
%We repeat the argument for the convenience for the reader. Testing 
%\begin{align} \label{chemicaleq}
%\mu_{\eps}=\mathcal{L}_{\eps} c_{\eps} + f'(c_{\eps})
%\end{align}
%by $f_0'(c_{\eps})$, taking into account that $(\mu_{\eps})_{\eps \in (0, 1)}$ is bounded in $L^2(0, T; L^2(\Omega))$ and using the monotonicity of $f_0'$, we derive that 
%\begin{align} \label{est:orderpar1}
%\|f_0'(c_{\eps})\|_{L^2(0, T; L^2(\Omega))} \leq M
%\end{align} 
%for some $M>0$ independent of $\eps \in (0, 1)$.
%From this and \eqref{chemicaleq}, we have 
%\begin{align} \label{est:orderpar2}
%\| \mathcal{L}_{\eps} \tc_{\eps} \|_{L^2(0, T; L^2(\Omega))} \leq M.
%\end{align}
we can show that there exists 
$ \xi,\eta \in L^2(0, T; L^2(\Omega)) $ such that
\begin{align}
f_0'(c_{\eps}) &\rightharpoonup \xi~~\mathrm{in}~~L^2(0, T; L^2(\Omega)) \\
\int_0^T \mathcal{B}_{\eps} (c_{\eps},\psi(t))\,dt &= \int_{\Omega \times (0,T)} \eta \psi \,d (x,t)\quad \text{for all }\psi \in L^2(0, T; H^1(\Omega))
\end{align}
for a suitable subsequence. 
Using $ c_{\eps} \rightarrow c $ in $C([0, T]; L^2(\Omega))$ one can deduce $f'_0(c)\in L^2(\Omega\times (0,T))$ and 
$ f_0'(c_{\eps}) \rightarrow f_0'(c) $ almost everywhere by the same arguments as e.g.\ in \cite[page 1093]{AbelsBosiaGrasselli}. Therefore in $L^q(Q_T)$ for every $1\leq q<2$. Hence $\xi = f_0'(c)
$. Passing to the limit in the weak formulation of \eqref{eq:order} we have 
\begin{align}
\langle \partial_t c(t), \psi) \rangle_{H^1(\Omega)} +
\int_{\Omega} m(c(x, t))\nabla \mu(x, t)\cdot \nabla \psi(x)\,dx = \int_{\Omega} c(x, t) v(x, t) \cdot \nabla \psi(x)\,dx 
\end{align}
for every $\psi \in H^1(\Omega)$ and for almost every $ t \in (0, T) $ and that $ \mu = \eta + f_0'(c) - \kappa \tc $.

It only remains to show that
\begin{align}
c \in L^{\infty}(0, T; H^1(\Omega)) \cap L^2(0, T; H^2(\Omega))
\end{align}
and $\eta = -\Div(A \nabla c)$.
Because of $c_{\eps} \rightarrow_{\eps\to 0} c$ in $C([0, T]; L^2(\Omega))$,
Lemma \ref{lem:3.3} and the energy estimate, we have $c(t)\in H^1(\Omega)$ for every $t\in [0,T]$ and
\begin{align}
\| \mathcal{E}_0(c) \|_{L^{\infty}(0, T)} 
\leq \liminf_{\eps \rightarrow 0} \| \mathcal{E}_0^{\eps} (c_{\eps}) \|_{L^{\infty}(0, T)} \leq M
\end{align}
Hence we have $ c \in L^{\infty}(0, T; H^1(\Omega)) $.
Since $\mathcal{E}_\ep (c_\ep)$ is quadratic in $c_\ep$, we have
\begin{align*}
\int_0^T \mathcal{E}_\ep (c_\ep(t)) \,dt + \int_0^T \mathcal{B}_\ep (c_\ep (t), \psi(t) - c_\ep(t))\,dt \leq \int_0^T \mathcal{E}_\ep(\psi(t))\,dt
\end{align*}
for any $\psi \in L^2(0,T;H^1(\Omega))$ with $(\psi(t))_{\Omega} =m_\Omega$ for almost all $t\in (0,T)$. 
Since $c_\ep \rightarrow_{\eps\to 0} c$ in $C([0, T]; L^2(\Omega))$, using Lemma \ref{lem:3.3} and Fatou's lemma, we derive
\begin{align*}
\int_{Q_T} \mathcal{E}_0(c(t))\,dt + \int_{Q_T} \eta(x,t) (\psi - c)(t, x) d(x, t) 
\leq \int_{Q_T} \mathcal{E}_0(\psi(t))\,dt
\end{align*}
for every $\psi \in L^2(0, T; H^1(\Omega))$ with $(\psi(t))_{\Omega} = m_\Omega$ for almost every
$ t \in (0, T)$. If we take $\psi(x,t)=c(x,t)+h \chi(t) \tau(x)$, where $ h \in \mathbb{R}$,
$ \chi \in C([0, T])$ and $ \tau \in H^1_{(0)}(\Omega) $, divide by $h$, and pass to the limit $ h \rightarrow 0$,
we obtain 
\begin{align*}
\int_{\Omega} \eta(x, t) \tau(x)\,dx = \int_{\Omega} A \nabla c(x, t) \cdot \nabla \tau(x)\,dx
\end{align*}
for a.e. $t \in (0, T)$ and for all $\tau \in H^1_{(0)}(\Omega)$.
Finally, by classical elliptic regularity theory, we conclude that $c \in L^2(0, T; H^2(\Omega))$,
$ \eta = - \Div(A \nabla c)$ and $ \left. \no \cdot (A \nabla c) \right|_{\partial \Omega} = 0$.
\end{proof}

\section{Application to a Navier-Stokes/Cahn-Hilliard System}\label{sec:NSCH}

In this section, we consider the application of our result to a Navier-Stokes/Cahn-Hilliard system. 

We consider the following equation, which is a nonlocal variant of a diffuse interface model 
of two-phase flows with unmatched densities, proposed by A., Garcke, and Gr\"un~\cite{AbelsGarckeGruen2}.
Here the chemical potential equation involves a term where a nonlocal operator as before is acting on the order parameter, instead of the Laplacian in the original model. More precisely, we consider
\begin{align}
 \partial_t (\rho \mathbf{v}) + \operatorname{div} ( \bv \otimes(\rho \bv + \tbj)) - \operatorname{div} (2 \eta(\tc) D \bv)
  + \nabla p  
  & = \mu \nabla \tc & \text{in } \, Q_T,  \label{eq:1} 
\\
 \operatorname{div} \, \bv &= 0& \text{in } \, Q_T,  \label{eq:2} 
\\
 \partial_t \tc + \bv \cdot \nabla \tc &= \operatorname{div}\left(m(\tc) \nabla \mu \right)& \text{in } \, Q_T, \label{eq:3} 
\\
 \mu = \Psi'(\tc)  &+\LL_\eps \tc & \text{in } \, Q_T, \label{eq:4} 
\end{align}
where $\rho=\rho(\tc):= \frac{\tilde{\rho}_1+\tilde{\rho}_2}2+ \frac{\tilde{\rho}_2-\tilde{\rho}_1}2\tc $, $\tbj = -\frac{\tilde{\rho}_2 - \tilde{\rho}_1}{2} m(\tc) \nabla \mu$, 
$Q_T=\Omega\times(0,T)$.

We assume that $\Omega \subset \mathbb{R}^d$, $d=2,3$, is a bounded domain with $C^2$-boundary. Here and in the following $\mathbf{v}$, $p$, and $\rho$ are the (mean) velocity, the pressure and the
density of the mixture of the two fluids, respectively. Furthermore $\tilde{\rho}_j$, $j=1,2$, are the specific densities of the unmixed fluids, 
 $\tc$ is the difference of the volume fractions of the two fluids, and $\mu$ is the chemical
potential related to $\tc$. 
Moreover,  ${D}\mathbf{v}= \frac12(\nabla \mathbf{v} + \nabla \mathbf{v}^T)$,
$\eta(\tc)>0$ is the viscosity of the fluid mixture, and $m(\tc)>0$ is a mobility coefficient.
Finally,
 $\LL_\eps$ is defined as in \eqref{eq:defnL}.
 Here the kernels $k_{\eps} \colon (\Rn\setminus\{0\})\to [0,\infty)$ satisfy the conditions (A1)-(A3) in Section~\ref{sec:prelim}. 

We add to our system  the boundary and initial conditions
\begin{alignat}{2}\label{eq:5}
 \bv|_{\partial \Omega} &= 0 &\qquad& \text{on}\ \partial\Omega\times (0,T), \\\label{eq:6}
 \partial_\no \mu|_{\partial \Omega} &= 0&& \text{on}\ \partial\Omega\times (0,T),  \\\label{eq:7}
 \left(\bv , \tc \right)|_{t=0} &= \left( \bv_0 , \tc_0 \right) &&\text{in}\ \Omega. 
\end{alignat}
We note that \eqref{eq:5} is the usual no-slip boundary condition for the velocity field and $\partial_\no \mu |_{\partial \Omega} = 0$ describes that there is no mass flux of the fluid components 
through the boundary. Furthermore, in dependence on how singular $k_\eps$ is at the origin, there will be an additional implicit boundary condition for $\tc$, which will be part of the weak formulation below.

There have recently been a lot of studies concerning existence of weak solutions and well-posedness of this model and its nonlocal variant. In particular, Frigeri~\cite{FrigeriNonlocalAGG} proved existence of weak solutions for $W^{1,1}$-kernels, which was extended by the authors to operators of regional fractional operator type in \cite{AbelsTerasawaNonlocalAGG}. We refer to \cite{GalGiroginiGrasselliPoiatti23} for a recent result for the local system and further literature.

For defining weak solutions of the system above, we introduce the following notation.
We introduce the standard Hilbert spaces for the Navier-Stokes
equations 
$$
L^2_\sigma(\Omega):=\overline{\mathcal{V}}^{L^2(\Omega)^d},\qquad
V_\sigma:=\overline{\mathcal{V}}^{H^1_0(\Omega)^d},\qquad\mathcal{V}:=\{\bv \in
C^\infty_0(\Omega)^d:\Div \bv=0\},
$$
and recall that these spaces can be characterized in the following way
$$
L^2_\sigma(\Omega):=\{\bv \in
L^2(\Omega)^d:\Div \bv=0,\:\:\bv \cdot \no |_{\partial\Omega}=0\},\quad
V_\sigma:=\{\bv \in H_0^1(\Omega)^d: \Div \bv =0\}.
$$
The norm and scalar product in $L^2_\sigma(\Omega)$ will be denoted again by
$\Vert\,\cdot\,\Vert$ and $(\cdot\,,\,\cdot)$, respectively, and
the space $V_\sigma$ is endowed with the scalar product
\begin{align*}
(\bv_1,\bv_2)_{V_\sigma}:=(\nabla\bv_1,\nabla\bv_2)=2\big(D\bv_1,D\bv_2\big)
\quad\text{for all }\bv_1,\bv_2\in V_\sigma. \nonumber
\end{align*}
We also introduce the Stokes operator $A$ with no-slip boundary condition. Recall that
$A\colon D(A)\subset L^2_\sigma(\Omega)\to L^2_\sigma(\Omega)$ is defined as $\,A:=-P\Delta$, with domain
$\,D(A)=H^2(\Omega)^d\cap V_\sigma$,
where $P\colon L^2(\Omega)^d\to L^2_\sigma(\Omega)$ is the Helmholtz projection.

For the equation above, we define its weak solution as follows. 
The definition is as in the author's contribution~\cite{AbelsTerasawaNonlocalAGG}.
\begin{definition} \label{def-weak}
  Let $\ve_0 \in L^2_\sigma(\Omega)$, $\tc_0 \in L^\infty(\Omega)$ with $|\tc_0|\leq 1$ almost everywhere, $T\in (0,\infty)$ and $\eps>0$ be given. Then $(\ve,\tc)$ is a weak solution of the nonlocal Navier-Stokes/Cahn-Hilliard system \eqref{eq:1}-\eqref{eq:7} if
  \begin{align*}
    \ve &\in BC_w([0,T]; L^2_\sigma(\Omega))\cap L^2(0,T; H^1_0(\Omega)^d),\\
    \tc &\in BC_w([0,T];L^2(\Omega))\cap L^\infty(0,T;V_\eps)\cap L^2(0,T;H^1(\Omega)),\\
    \mu &\in L^2(0,T; H^1(\Omega)), \quad f'(c)\in L^2(0,T;L^2(\Omega)),\\
    \partial_t (\rho \ve)|_{\mathcal{D}(A)}&\in L^{4/3} (0,T;\mathcal{D}(A)'), \quad \partial_t\tc \in L^2(0,T;(H^1(\Omega))'),
  \end{align*}
 $|\tc(x,t)|<1$ almost everywhere in $Q_T$, $\ve|_{t=0}= \ve_0$, $\tc|_{t=0}= \tc_0$ and the following holds true:
  \begin{enumerate}
  \item For every $\psi \in H^1(\Omega)$ and $\bpsi\in \mathcal{D}(A)$ and almost every $t\in (0,T)$ we have
    \begin{align*}
      \duality{\partial_t (\rho\ve)(t)}{\bpsi}_{\mathcal{D}(A)} - \int_\Omega ( (\ve+ \tbj) \otimes \rho\ve : D\bpsi \,dx &+ \int_\Omega 2\nu(\tc)D\ve:D\bpsi\, dx\\
      &= -\int_\Omega \tc \nabla \mu\cdot \bpsi \,dx,\\
      \duality{\mu(t)}{\psi}_{L^2(\Omega)} &= 2 \mathcal{E}_{\eps} (c(t), \psi) + \int_\Omega f'(c(t))\psi \,dx  \\     
      \duality{\partial_t \tc(t)}{\psi}_{H^1(\Omega)} + \int_\Omega m(\tc) \nabla \mu\cdot \nabla \psi &= \int_\Omega \ve \tc \cdot \nabla \psi\,dx,
    \end{align*}
    where $\tbj= -\tfrac{\tilde{\rho}_2 - \tilde{\rho}_1}{2}m(\tc) \nabla \mu$.
  \item The energy inequality
    \begin{equation}
      \label{eq:Energy}
      E_{tot, \ep}(\ve(t),\tc(t))+ \int_0^t\int_\Omega (2\nu(\tc)|D\ve|^2+ m(\tc)|\nabla \mu|^2) \,dx\, d\tau \leq E_{tot, \ep}(\ve(0),\tc(0))
    \end{equation}
    holds true for all $t \in [0,T]$, where
    \begin{align*}
            E_{tot, \ep}(\ve,\tc)&:= \frac12\int_\Omega \rho(\tc)|\ve|^2\, dx + E_\ep(\tc),\\
      E_\ep(\tc) &:= \frac14 \int_\Omega \int_\Omega k_\ep(x-y) (\tc(x)-\tc(y))^2 \,dx\, dy + \int_\Omega f(\tc(x))\, dx.
    \end{align*}
  \end{enumerate}
\end{definition}
\begin{remark} 
The existence of weak solutions of the nonlocal Navier-Stokes/Cahn-Hilliard equation in the sense of Definition \ref{def-weak} in the case when $ k_{\ep} \in W^{1, 1}(\mathbb{R}^d)$ follows from Frigeri
\cite{FrigeriNonlocalAGG} and in the case of the first example of the kernel in Remark 2.1 follows from \cite{AbelsTerasawaNonlocalAGG}. The definition of weak solutions in \cite{AbelsTerasawaNonlocalAGG} is different from Definition \ref{def-weak}, but it can be shown that it is equivalent in a standard manner. For details, we refer to Boyer and Fabrie \cite{BoyerFabrie}.
\end{remark}

\begin{theorem}
For any $\eps \in (0, 1)$, let $ \ve_{0, \eps} \in L^2_\sigma(\Omega) $ and $ \tc_{0, \eps} \in L^2(\Omega)$ with $ | \tc_{0, \eps} | \leq 1$ a.e., 
$ \frac{1}{|\Omega|}\int_{\Omega} \tc_{0, \eps}(x) = m_{\Omega} $ for all $ \eps \in (0, 1) $ and some $ m_{\Omega} \in (-1, 1)$. Moreover, we assume that there are 
$ \ve_0 \in L^2_{\sigma}(\Omega) $ and $\tc_0 \in H^1(\Omega)$ such that 
$ \ve_{0, \eps} \rightarrow \ve_0 $ in $ L^2_{\sigma}(\Omega) $, $\tc_{0, \eps} \rightarrow \tc_0$ in $ L^2(\Omega)$, and $ E_{\eps, tot}(\ve_{0, \eps}, \tc_{0, \eps}) \rightarrow
E_{tot}(\ve_0, \tc_0)$  as $ \eps \rightarrow 0+$, where 
\begin{align*}
E_{tot}(\ve, \tc) &:= \frac12 \int_{\Omega} \rho(\tc) |\ve|^2\,dx + E_0(\tc),\\
E_0(\tc) &:= \frac12 \int_{\Omega} (A \nabla \tc(x))\cdot \nabla \tc(x)\,dx + \int_{\Omega} f(\tc(x)) \,dx.
\end{align*}
If $ (\ve_{\eps}, \tc_{\eps})_{\eps\in (0,1)} $  
are weak solutions of the nonlocal Navier-Stokes/Cahn-Hilliard system with initial values $(\ve_{0, \eps}, \tc_{0, \eps})_{\eps\in (0,1)}$, then
\begin{alignat}{2}\label{eq:Conv1}
\mathbf{v}_{\eps} &\rightharpoonup^\ast \mathbf{v}&\qquad &\text{ in }L^{\infty}(0, T;L^2_{\sigma}(\Omega)), \\\label{eq:Conv2}
\mathbf{v}_{\eps} &\rightharpoonup \mathbf{v}&&\text{ in }L^2(0, T; H^1_0 (\Omega)^d), \\\label{eq:Conv3}
%\tc_{\ep} &\rightharpoonup^\ast \tc &&\text{ in }L^{\infty}(0, T; L^2(\Omega)),\\
\mu_{\eps} &\rightharpoonup \mu && \text{ in }L^2(0, T;H^1(\Omega)),\\\label{eq:Conv4} 
\mathbf{v}_{\eps} &\rightarrow \mathbf{v} && \text{ in }L^2(0, T; L^2(\Omega)) \text{ and almost everywhere}, \\\label{eq:Conv5}
\tc_{\eps} &\rightarrow \tc && \text{ in  }C([0,T];L^2(\Omega)) \text{ and almost everywhere}
\end{alignat}
for a suitable subsequence $\eps =\eps_k\to_{k\to\infty} 0$,
where  $ (\ve,\tc) $ is a weak solution of the local Navier-Stokes/Cahn-Hilliard system in the sense that
\begin{align*}
  \ve &\in BC_w([0,T];L^2_\sigma(\Omega))\cap L^2(0,T;H^1_0(\Omega)^d),\\
  \tc & \in C([0,T]; L^2(\Omega))\cap BC_w([0,T]; H^1(\Omega))\cap L^2(0,T;H^2(\Omega)), f'(\tc) \in L^2(0,T;L^2(\Omega)),\\
  \mu & \in L^2(0,T; H^1(\Omega)),
\end{align*}
 $|\tc(x,t)|<1$ almost everywhere in $Q_T$, $\ve|_{t=0}= \ve_0$, $\tc|_{t=0}= \tc_0$ and the following holds true:
  \begin{enumerate}
  \item For every $\psi \in H^1(\Omega)$ and $\bpsi\in \mathcal{D}(A)$ and almost every $t\in (0,T)$ we have
    \begin{align*}
      \duality{\partial_t (\rho\ve)(t)}{\bpsi}_{\mathcal{D}(A)} - \int_\Omega ( (\ve+ \tbj) \otimes \rho\ve : D\bpsi \,dx &+ \int_\Omega 2\nu(\tc)D\ve:D\bpsi\, dx\\
      &= -\int_\Omega \tc \nabla \mu\cdot \bpsi \,dx, \\  
      \duality{\partial_t \tc(t)}{\psi}_{H^1(\Omega)} + \int_\Omega m(\tc) \nabla \mu\cdot \nabla \psi &= \int_\Omega \ve \tc \cdot \nabla \psi\,dx, 
    \end{align*}
    where $\tbj= -\tfrac{\tilde{\rho}_2 - \tilde{\rho}_1}{2}m(\tc) \nabla \mu$, and
    \begin{alignat*}{2}
               \mu& = -\Div (A\nabla \tc) +f'(c) &\qquad& \text{in }\Omega\times (0,T), \\
           \no\cdot A\nabla \tc|_{\partial\Omega}&=0&\qquad &\text{on }\partial\Omega\times (0,T).
    \end{alignat*}
  \item The energy inequality
    \begin{equation}
      \label{eq:Energy}
      E_{tot, 0}(\ve(t),\tc(t))+ \int_0^t\int_\Omega (2\nu(\tc)|D\ve|^2+ m(\tc)|\nabla \mu|^2) \,dx\, d\tau \leq E_{tot, 0}(\ve(0),\tc(0))
    \end{equation}
    holds true for all $t \in [0,T]$.
  \end{enumerate}
\end{theorem}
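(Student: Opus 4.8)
The plan is to mirror the proof of Theorem~\ref{maintheorem} for the Cahn-Hilliard part while treating the momentum balance by the standard compactness machinery for the coupled Cahn-Hilliard-Navier-Stokes system, extracting all convergences from a single set of a~priori bounds furnished by the energy inequality \eqref{eq:Energy}. Since $E_{\eps,tot}(\ve_{0,\eps},\tc_{0,\eps})\to E_{tot}(\ve_0,\tc_0)$, the right-hand side of \eqref{eq:Energy} is bounded uniformly in $\eps$, and together with the coercivity $\rho(\tc)\geq c>0$, $\nu(\tc)\geq c>0$, $m(\tc)\geq c>0$, Korn's inequality and $f\geq -C$ this yields uniform bounds for $(\ve_\eps)$ in $L^\infty(0,T;L^2_\sigma(\Omega))\cap L^2(0,T;H^1_0(\Omega)^d)$, for $(\sqrt{m(\tc_\eps)}\nabla\mu_\eps)$ and hence $(\nabla\mu_\eps)$ in $L^2(Q_T)$, and for the nonlocal energies $\E_\eps(\tc_\eps)$ in $L^\infty(0,T)$. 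First I would bound the spatial mean of $\mu_\eps$ exactly as in Theorem~\ref{maintheorem}, testing the chemical-potential identity with $\tc_\eps(t)-m_\Omega$ and the transport equation with $\mathcal{N}(\tc_\eps(t))(\tc_\eps(t)-m_\Omega)$ via the uniformly bounded Neumann operator, now carrying along the extra convective contribution $\int_\Omega \tc_\eps\ve_\eps\cdot\nabla\mathcal{N}(\tc_\eps)(\tc_\eps-m_\Omega)\,dx$, which is controlled by the $L^2(Q_T)$-bound on $\ve_\eps$ and \eqref{N-estimate}; with the Poincar\'e-Wirtinger inequality this gives $(\mu_\eps)$ bounded in $L^2(0,T;H^1(\Omega))$, hence \eqref{eq:Conv3} for a subsequence.

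Next I would control the time derivatives: $\partial_t\tc_\eps$ in $L^2(0,T;(H^1(\Omega))')$ from the weak transport equation using $\nabla\mu_\eps\in L^2(Q_T)$ and $\tc_\eps\ve_\eps\in L^2(Q_T)$, and $\partial_t(\rho\ve_\eps)$ in $L^{4/3}(0,T;\mathcal{D}(A)')$ from the momentum balance, where the convective, viscous and capillary terms are estimated by the interpolation $L^\infty(L^2)\cap L^2(H^1)\hookrightarrow L^{10/3}(Q_T)$ (for $d=3$) together with the bound on $\tc_\eps\nabla\mu_\eps$. Strong convergence then follows from two applications of the Aubin-Lions lemma. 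For the phase field, $L^2(\Omega)\hookrightarrow\hookrightarrow (H^1(\Omega))'$ gives $\tc_\eps\to\tc$ in $C([0,T];(H^1(\Omega))')$, and the crucial upgrade to $C([0,T];L^2(\Omega))$ and a.e.\ convergence, \eqref{eq:Conv5}, is obtained from Lemma~\ref{lem:3.4} and the uniform energy bound, precisely as in Theorem~\ref{maintheorem}; this is the step where admissibility of singular, non-integrable kernels is used. For the velocity I would first pass to the limit in $\rho(\tc_\eps)\ve_\eps$ using $\tc_\eps\to\tc$ a.e.\ (so $\rho(\tc_\eps)\to\rho(\tc)$ boundedly) and the density-weighted Aubin-Lions argument for the system of Abels-Garcke-Gr\"un type (cf.\ \cite{AbelsTerasawaNonlocalAGG,FrigeriNonlocalAGG}), thereby deducing \eqref{eq:Conv4}, i.e.\ $\ve_\eps\to\ve$ in $L^2(Q_T)$ and a.e., together with \eqref{eq:Conv1}-\eqref{eq:Conv2}.

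With these convergences in hand the nonlinear terms close as in the decoupled case. The a.e.\ convergence of $\tc_\eps$ and the boundedness of $m,\rho$ give $m(\tc_\eps)\nabla\mu_\eps\rightharpoonup m(\tc)\nabla\mu$ and $\ve_\eps\tc_\eps\to\ve\tc$ in $L^2(Q_T)$, the capillary term satisfies $\tc_\eps\nabla\mu_\eps\rightharpoonup\tc\nabla\mu$ (strong times weak, testing against $\bpsi\in\mathcal{D}(A)\hookrightarrow C^0$), and the convection $\ve_\eps\otimes\rho(\tc_\eps)\ve_\eps$ converges by strong $L^2$-convergence of $\ve_\eps$ with the uniform $L^{10/3}$-bound. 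The singular part $f_0'(\tc_\eps)$ converges weakly in $L^2(Q_T)$ to some $\xi$, and $\xi=f_0'(\tc)$ with $|\tc|<1$ a.e.\ follows from a.e.\ convergence and the monotonicity argument of \cite[page 1093]{AbelsBosiaGrasselli}, exactly as in Theorem~\ref{maintheorem}. The nonlocal bilinear form is replaced by its weak limit $\eta$, and the variational inequality obtained from the quadratic structure of $\E_\eps$ together with the lower-semicontinuity part of Lemma~\ref{lem:3.3} yields, after inserting $\psi=\tc+h\chi\tau$ with $\chi\in C([0,T])$, $\tau\in H^1_{(0)}(\Omega)$ and letting $h\to0$, the relation $\int_\Omega\eta\tau = \int_\Omega A\nabla\tc\cdot\nabla\tau$ for all $\tau\in H^1_{(0)}(\Omega)$; classical elliptic regularity then gives $\tc\in L^2(0,T;H^2(\Omega))$, $\eta=-\Div(A\nabla\tc)$ and the natural boundary condition $\no\cdot(A\nabla\tc)|_{\partial\Omega}=0$, while the bound $\|\E_0(\tc)\|_{L^\infty(0,T)}\leq\liminf_{\eps\to0}\|\E_\eps(\tc_\eps)\|_{L^\infty(0,T)}$ gives $\tc\in L^\infty(0,T;H^1(\Omega))$.

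Finally I would pass to the limit in the energy inequality \eqref{eq:Energy}: weak lower semicontinuity of the kinetic energy (using $\sqrt{\rho(\tc_\eps)}\ve_\eps\rightharpoonup\sqrt{\rho(\tc)}\ve$), the nonlocal-to-local lower semicontinuity of Lemma~\ref{lem:3.3} for the gradient energy, Fatou's lemma and the continuity of $f$ for the potential, and weak lower semicontinuity for the dissipative terms $2\nu(\tc)|D\ve|^2+m(\tc)|\nabla\mu|^2$, combined with the assumed convergence of the initial energies, deliver \eqref{eq:Energy} in the limit. I expect the main obstacle to be the genuinely coupled steps rather than the nonlocal-to-local transition itself: namely securing strong convergence of the velocity under only the density-weighted bound $\partial_t(\rho\ve_\eps)\in L^{4/3}(0,T;\mathcal{D}(A)')$ and simultaneously closing the capillary term $\tc_\eps\nabla\mu_\eps$ and the momentum convection, since these couple the fluid and phase-field limits; the passage for the phase field alone is supplied essentially verbatim by Lemma~\ref{lem:3.3}, Lemma~\ref{lem:3.4} and Theorem~\ref{maintheorem}.
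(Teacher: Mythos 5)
Your proposal follows essentially the same route as the paper's (very brief) proof, which likewise derives the uniform bounds from the energy inequality, obtains $\tc_\eps\to\tc$ in $C([0,T];L^2(\Omega))$ first via Lemma~\ref{lem:3.3}/Lemma~\ref{lem:3.4}, then the strong convergence of $\ve_\eps$ by the Abels--Garcke--Gr\"un-type compactness argument of the cited reference, and identifies the limit chemical potential through Theorem~\ref{maintheorem}. Your write-up simply fills in the details that the paper delegates to \cite{AbelsTerasawaNonlocalToLocal} and to Section~\ref{sec:NonlocalLimit}, and I see no gap in it.
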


\begin{proof}
The proof is along the same lines as that of the main theorem in \cite{AbelsTerasawaNonlocalToLocal}. Here one has to deal with the convergence for the more general kernels, using the results of Section~\ref{sec:NonlocalLimit}. First we prove that $\tc_{\eps} \rightarrow_{\eps\to 0} \tc$ strongly in $C([0,T];L^2(\Omega))$ and almost everywhere
employing Lemma~\ref{lem:3.3} and the boundedness of $ \ve_{\eps} \in L^2(0, T; L^2_\sigma(\Omega)) $ derived from the energy inequality of the nonlocal Navier-Stokes/Cahn-Hilliard system. 
After deriving it, by similar arguments as in \cite{AbelsTerasawaNonlocalToLocal}, we can show that $ \ve_{\eps} \rightarrow_{\eps\to 0} \ve $ in $ L^2(0, T; L^2(\Omega))^d$ if we take some subsequence of $ (\ve_{\eps})_{\eps\in (0,1)} $. The fact that $\tc$ satisfies the weak formulation can be shown using Theorem \ref{maintheorem}. The rest is similar to \cite{AbelsTerasawaNonlocalToLocal} and we omit it.
\end{proof}

\begin{remark}
From the physical point of view, the case when the kernel $k_{\ep} $ is radially symmetric is more natural taking into account isotropy of the fluids. In that case, the limit equation is also isotropic, i.e., $A = c I$ where $c>0$ is a positive constant and $I$ is an identity matrix.
\end{remark}

%\section{Old Stuff...}

%\begin{align*}
%k_\eps(x, y, z) = J_\eps(A^{\frac12} x)
%\end{align*}
%\textbf{Case 1:}
%\begin{align*}
%  k_\eps(x,y,z) =
%  \begin{cases}
%    \eps^{-d-2}k(x,y,\tfrac{z}\eps) &\text{if }0<|z|\leq \eps,\\
%    \eps^{-d}|h|^{-2}k(x,y,\tfrac{z}\eps) &\text{if }\eps <|z|\leq 1,\\
%    \eps^{-d}k(x,y,\tfrac{z}\eps) &\text{if }1<|z|,
%  \end{cases}
%\end{align*}
%where
%\begin{alignat}{2}
%    &\quad  k(x,y,z)=k(y,x,-z)\,, \label{k-ass-one} \\
%    &\quad |\partial_x^\beta\partial_y^\gamma\partial_z^\delta k(x,y,z)| \leqslant
%        C_{\beta,\gamma,\delta}|z|^{-d-\alpha-|\delta|} \, , \label{k-ass-two} \\
%    &\quad  c_0 |z|^{-d-\alpha} \leqslant k(x,y,z)\leqslant C_0 |z|^{-d-\alpha} \,. \label{k-ass-three}
%\end{alignat}
%for all $x,y,z\in\Rn$, $z\neq 0$ and $\beta, \gamma, \delta\in\N_0^d$ with $|\beta|+|\gamma|+|\delta|\leqslant d+2$ where $\alpha$ is the order of %the operator. Unless specified otherwise, throughout this paper we will always consider the case $\alpha \in (1,2)$.

%\noindent
%\textbf{Case 2:}
%\begin{equation*}
%  k_\eps (x,y,z) = a(x,y)m_\eps|z|^{-d-2+\eps}\quad \text{for all }x,y,z\in\R^d,z\neq 0 
%\end{equation*}
%for some normalization constant $m_\eps$ and $0< \underline{a}\leq |a(x,y)|\leq \overline{a}$ plus suitable estimates for the derivatives of $a$ if necessary.

%%%%%%%%%%%%%%%%%%%%%%%%%%%%%%%%%%%%%%%%%%%%%%%%%%%%%%%%%%%%%%%%%%%%%%%%%%%%%%%%%%%%%%%%%%%%%%%%
%%%%%%%%%%%%%%%%%%%%%%%%%%%%%%%%%%%%%%%%%%%%%%%%%%%%%%%%%%%%%%%%%%%%%%%%%%%%%%%%%%%%%%%%%%%%%%%%
%%%%%%%%%%%%%%%%%%%%%%%%%%%%%%%%%%%%%%%%%%%%%%%%%%%%%%%%%%%%%%%%%%%%%%%%%%%%%%%%%%%%%%%%%%%%%%%%

\section*{Acknowledgments}
This work was supported by the University of Regensburg Foundation Hans Vielberth. The second author was supported by JSPS KAKENHI Grant-in-Aid for Young Scientists (B) \#JP17K17804. He would also like to thank Professor Mitsuru Sugimoto for his support by JSPS KAKENHI Grant-in-Aid for Scientific Research (A) \#JP22H00098. This work was also supported by the Research Institute for Mathematical Sciences, an International Joint Usage/Research Center located in Kyoto University. All these supports are gratefully acknowledged.
\bibliographystyle{abbrv}

\bibliography{Bibliography} %\nocite{*}

\end{document}